\def\makeautorefname#1#2{\expandafter\def\csname#1autorefname\endcsname{#2}}
\newtheorem{thm}{Theorem}[section]
\newtheorem{cor}{Corollary}[section]
\newtheorem{prop}{Proposition}[section]
\newtheorem{lem}{Lemma}[section]
\theoremstyle{definition}
\newtheorem{rem}{Remark}[section]
\newenvironment{pf}{\begin{proof}}{\end{proof}}
\let\c@cor=\c@thm
\let\c@prop=\c@thm
\let\c@lem=\c@thm
\let\c@defn=\c@thm
\let\c@eg=\c@thm
\let\c@notn=\c@thm
\let\c@rem=\c@thm
\let\c@warn=\c@thm
\let\c@sch=\c@thm
\let\c@equation=\c@thm
\newcommand{\cA}{\ensuremath{\mathcal{A}}}
\newcommand{\C}{\ensuremath{\mathbb{C}}}
\newcommand{\F}{\ensuremath{\mathbb{F}}}
\newcommand{\bM}{\ensuremath{\mathbb{M}_2}}
\newcommand{\bP}{\ensuremath{\mathbb{P}}}
\newcommand{\R}{\ensuremath{\mathbb{R}}}
\newcommand{\Z}{\ensuremath{\mathbb{Z}}}
\newcommand{\rtarr}{\longrightarrow}
\newcommand{\xrtarr}[1]{\xrightarrow{#1}}
\newcommand{\iso}{\cong}
\newcommand{\cl}{\mathrm{cl}}
\DeclareMathOperator{\Ext}{Ext}
\DeclareMathOperator{\Tor}{Tor}
\newcommand{\hzeromult}{
\class(\lastx,\lasty+1)
\structline
}
\newcommand{\hzerotower}[1]{
    \foreach \i in {2,...,#1} {
        \hzeromult
    }
}
\newcommand{\honemult}{
\class(\lastx+1,\lasty+1)
\structline
}
\newcommand{\rhodiv}{
\class(\lastx+1,\lasty)
\structline[red]
}
\newcommand{\rhomult}{
  \class(\lastx-1,\lasty)
  \structline[red]
}
\newcommand{\rhocotower}[1]{
    \foreach \i in {2,...,#1} {
        \rhodiv
    }
}
\NewSseqGroup{\rhodiamond}{m}{
  \class[#1](0,0)
  \rhomult
  \honemult
  \structline[dashed](\lastx,\lasty,-1)(\lastx,\lasty-1,-1)
  \rhodiv
  \structline(\lastx,\lasty,-1)(\lastx-1,\lasty-1,-1)
}
\newcommand{\rhotowerhonediv}[2]{
    \foreach \i in {2,...,#1} {
        \rhomult
        \ifnum \numexpr\i-1 < #2
          \structline(\lastx,\lasty,-1)(\lastx-1,\lasty-1,-1)
        \fi
    }
}
\newcommand{\rhocotowerhonediv}[2]{
    \foreach \i in {2,...,#1} {
        \rhodiv
        \ifnum \numexpr\i-1 < #2
          \structline(\lastx,\lasty,-1)(\lastx-1,\lasty-1,-1)
        \fi
    }
}
\NewSseqGroup{\hiddenhzero}{}{
    \structline(0,0,-1)(0,1,-1)
}
\NewSseqGroup{\hzeroline}{}{
    \structline(0,0,-1)(0,1,-1)
}
\definecolor{magenta}{rgb}{1 0 1}
\NewSseqGroup{\hzerotauline}{}{
    \structline[color=magenta](0,0,-1)(0,1,-1)
}
\NewSseqGroup{\hiddenhone}{}{
    \structline(0,0,-1)(1,1,-1)
}
\NewSseqGroup{\honeline}{}{
    \structline(0,0,-1)(1,1,-1)
}
\NewSseqGroup{\htwoline}{}{
    \structline(0,0,-1)(3,1,-1)
}
\NewSseqGroup{\htwotauline}{}{
    \structline[color=magenta](0,0,-1)(3,1,-1)
}
\NewSseqGroup{\rholine}{}{
    \structline[red](0,0,-1)(-1,0,-1)
}
\NewSseqGroup{\hzeroloc}{}{
  \savestack
  \class(0,1)
  \restorestack
  \hzeroline(0,0)
  \draw[->,>=stealth](0,1,-1)--(0.0,1.7);
}
\NewSseqGroup{\rholoc}{}{
  \draw[red,->,>=stealth](0,0)--(-0.7,0);
}
\NewSseqGroup{\honeloc}{}{
  \draw[semithick,red,->,>=stealth](0,0)--(0.75,0.75);
}
\newcommand{\ExtMWLabel}[1] {
  \draw[fill=white](0.5,12.4) rectangle (7.5,13.6);
  \node[font=\large] at (4,13) { \mathrm{Ext_{C_2}}, \text{coweight } #1};
}
\newcommand{\EinfMWLabel}[1] {
  \draw[fill=white](-0.2,12.4) rectangle (10.2,13.6);
  \node[font=\large] at (5,13) {\mathrm{Adams} \ E_\infty\ \text{for}\  S^0, \text{coweight } #1};
}
\newcommand{\EinfkoMWLabel}[1] {
  \draw[fill=white](0.5,11.4) rectangle (9.5,13.6);
  \node[font=\large] at (5,13) {\mathrm{Adams} \ E_\infty\ \text{for}\  ko_{C_2},};
  \node[font=\large] at (5.0,12.1) {\text{coweight } #1};
}
\newcolumntype{C}{>{$}c<{$}}
\newcolumntype{L}{>{$}l<{$}}
\newcolumntype{R}{>{$}r<{$}}
\newcolumntype{H}{>{\setbox0=\hbox\bgroup$}c<{$\egroup}@{}}
\numberwithin{equation}{section}
\begin{document}


\sseqset{
    classes= {fill, inner sep = 0pt, minimum size = 0.25em},
    class labels={below=0.3pt,font=\scriptsize}, 
    differentials=black,
    struct lines=semithick,
    class pattern=linear, 
    class placement transform = { rotate = 135, scale = 1 },
    run off differentials = ->, 
    grid = go, 
    grid step=2,
    grid color = gray!50!white,
    scale=0.5,
    y range={0}{14}, 
    x range={0}{20}, 
    y tick step = 2,
    x tick step = 2,
}

\newcommand{\n}{x}

\renewcommand{\n}{0}
\begin{sseqdata}[name=MW\n]

\draw[dashed,fill=black!20] (0,-1) -- (21.0,9.5) -- (21.0,-1);

\begin{scope}[blue]
\class(0,0)
\rholoc(0,0)
\foreach \i in {1,...,14} {
  \honemult
  \rholoc(\lastx,\lasty)
}
\honemult
\hzeroloc(0,0)
\end{scope}

\class["{Q}\!h_1^4"](5,3)
\DoUntilOutOfBounds{
\honemult
}

\foreach \i in {1,2} {
\class(5+\i,3)
\structline[red](\lastx,\lasty)(\lastx-1,\lasty)
\DoUntilOutOfBounds{
\honemult
\structline[red](\lastx,\lasty)(\lastx-1,\lasty)
}
}

\class(9,4)
\structline[red](\lastx,\lasty)(\lastx-1,\lasty)
\DoUntilOutOfBounds{
\honemult
\structline[red](\lastx,\lasty)(\lastx-1,\lasty)
}

\foreach \i in {0,1,2} {
\class(13+\i,7)
\structline[red](\lastx,\lasty)(\lastx-1,\lasty)
\DoUntilOutOfBounds{
\honemult
\structline[red](\lastx,\lasty)(\lastx-1,\lasty)
}
}

\class(17,8)
\structline[red](\lastx,\lasty)(\lastx-1,\lasty)
\DoUntilOutOfBounds{
\honemult
\structline[red](\lastx,\lasty)(\lastx-1,\lasty)
}

\foreach \i in {0,1,2} {
\class(21+\i,11)
\structline[red](\lastx,\lasty)(\lastx-1,\lasty)
\DoUntilOutOfBounds{
\honemult
\structline[red](\lastx,\lasty)(\lastx-1,\lasty)
}
}

%

\class(7,1)
\rhodiv
\honemult
\rhomult
\honeline(\lastx-1,\lasty-1)
\hiddenhzero(\lastx,\lasty-1)

\class(6,2)
\rhocotower{3}

\class(8,3)
\rhodiv
\hiddenhone(7,2)
\hiddenhone(8,2)
\hiddenhzero(8,2)

\class(14,4)
\savestack
\honemult
\savestack\rhodiv\restorestack
\honemult
\rhocotowerhonediv{2}{2}
\restorestack
\hiddenhzero(16,5)
\hzerotower{3}
\rhocotower{3}

\class(15,1)
\rhodiv
\honemult
\rhomult
\honeline(\lastx-1,\lasty-1)
\hiddenhzero(\lastx,\lasty-1)

\class(16,7)
\hiddenhzero(\lastx,\lasty-1)
\rhodiv
\foreach \i in {-1,0} {\hiddenhone(\lastx-1+\i,\lasty-1) }

\class(18,2)
\hzeromult
\structline(\lastx,\lasty,-1)(\lastx-1,\lasty-1)

\class(14,2)
\hzeromult
\rhocotower{5}

\class(18,4)
\hiddenhzero(\lastx,\lasty-1)

\class(19,3)
\rhodiv
\draw[red](20,3)--(20.75,3);


\class(20,4)
\hzeromult


\ExtMWLabel{\n}

\end{sseqdata}


\renewcommand{\n}{0}
\begin{sseqdata}[name=MW{\n}Einf]

\begin{scope}[blue]
\class(0,0)
\DoUntilOutOfBounds{
  \rholoc(\lastx,\lasty)
  \honemult
}
\hzeroloc(0,0)
\end{scope}

\class["{Q}\!h_1^4"](5,3)
\DoUntilOutOfBounds{
  \structline[red,dashed](\lastx,\lasty)(\lastx-1,\lasty+1)
  \honemult
}

\foreach \i in {1,2} {
\class(5+\i,3)
\DoUntilOutOfBounds{
  \structline[red](\lastx,\lasty)(\lastx-1,\lasty)
  \honemult
}
}

\class(9,4)
\structline[red](\lastx,\lasty)(\lastx-1,\lasty)
\DoUntilOutOfBounds{
\honemult
\structline[red](\lastx,\lasty)(\lastx-1,\lasty)
}

\foreach \i in {0,1,2} {
\class(13+\i,7)
\structline[red](\lastx,\lasty)(\lastx-1,\lasty)
\DoUntilOutOfBounds{
\honemult
\structline[red](\lastx,\lasty)(\lastx-1,\lasty)
}
}

\class(17,8)
\structline[red](\lastx,\lasty)(\lastx-1,\lasty)
\DoUntilOutOfBounds{
\honemult
\structline[red](\lastx,\lasty)(\lastx-1,\lasty)
}

\foreach \i in {0,1,2} {
\class(21+\i,11)
\structline[red](\lastx,\lasty)(\lastx-1,\lasty)
\DoUntilOutOfBounds{
\honemult
\structline[red](\lastx,\lasty)(\lastx-1,\lasty)
}
}

%
%

\draw[dashed,fill=black!20] (0,-1) -- (21.0,9.5) -- (21.0,-1);

\class(7,1)
\rhodiv
\honemult
\rhomult
\honeline(\lastx-1,\lasty-1)
\hzeroline(\lastx,\lasty-1)

\class(6,2)
\rhocotower{3}

\class(8,3)
\rhodiv
\honeline(7,2)
\honeline(8,2)
\hzeroline(8,2)

\class(18,2)
\hzeromult

\class(14,4)

\class(14,2)
\class(15,3)
\structline[red,dashed](\lastx,\lasty,-1)(\lastx-1,\lasty+1,-1)
\rhocotower{3}

\class(15,6)
\rhodiv

\class(15,1)
\structline[red,dashed](\lastx,\lasty)(\lastx-1,\lasty+1)
\rhodiv
\honemult
\honeline(\lastx,\lasty)
\rhomult
\honeline(\lastx-1,\lasty-1)
\hzeroline(\lastx,\lasty-1)

\class(16,7)
\hzeroline(\lastx,\lasty-1)
\rhodiv
\foreach \i in {-1,0} {\honeline(\lastx-1+\i,\lasty-1) }

\class(19,3)
\rhodiv
\draw[red](20,3)--(20.75,3);

\class(20,4)
\hzeromult


\EinfMWLabel{\n}

\end{sseqdata}


\renewcommand{\n}{0}
\begin{sseqdata}[name=MW{\n}Einfko, class placement transform = { rotate = -45, scale = 1 }]

\begin{scope}[blue]
\class(0,0)
\DoUntilOutOfBounds{
  \rholoc(\lastx,\lasty)
  \honemult
}
\hzeroloc(0,0)
\end{scope}

\class["{Q}\!h_1^3"](4,2)
\DoUntilOutOfBounds{
  \structline[red,dashed](\lastx,\lasty)(\lastx-1,\lasty+1)
  \honemult
}
\hzeroloc(4,2)

\foreach \i in {1,2,3} {
  \class(5+\i,3)
  \DoUntilOutOfBounds{
    \structline[red](\lastx,\lasty)(\lastx-1,\lasty)
    \honemult
  }
}
\hzeroloc(8,3)

\class(12,6)
\structline[red](\lastx,\lasty)(\lastx-1,\lasty)
\DoUntilOutOfBounds{
\honemult
\structline[red](\lastx,\lasty)(\lastx-1,\lasty)
}
\hzeroloc(12,6)

\foreach \i in {1,2,3} {
\class(13+\i,7)
\structline[red](\lastx,\lasty)(\lastx-1,\lasty)
\DoUntilOutOfBounds{
\honemult
\structline[red](\lastx,\lasty)(\lastx-1,\lasty)
}
}
\hzeroloc(16,7)

\class(20,10)
\structline[red](\lastx,\lasty)(\lastx-1,\lasty)
\DoUntilOutOfBounds{
\honemult
\structline[red](\lastx,\lasty)(\lastx-1,\lasty)
}
\hzeroloc(20,10)


\EinfkoMWLabel{\n}

\end{sseqdata}

\title[$C_2$-equivariant stable homotopy groups]{The Bredon-Landweber region in $C_2$-equivariant stable homotopy groups}
\author{Bertrand J. Guillou}
\address{Department of Mathematics\\ University of Kentucky\\
Lexington, KY 40506, USA}
\email{bertguillou@uky.edu}

\author{Daniel C. Isaksen}
\address{Department of Mathematics\\ Wayne State University\\
Detroit, MI 48202, USA}
\email{isaksen@wayne.edu}
\thanks{The first author was supported by NSF grant DMS-1710379.
The second author was supported by NSF grant DMS-1202213.}

\subjclass[2000]{55Q91, 55T15; Secondary 14F42, 55Q45}

\keywords{equivariant stable homotopy group, Mahowald root invariant,
Adams spectral sequence}

\begin{abstract}
We use the $C_2$-equivariant Adams spectral sequence 
to compute part of the $C_2$-equivariant
stable homotopy groups $\pi^{C_2}_{n,n}$.
This allows us to recover results of Bredon and Landweber
on the image of the geometric fixed-points map
$\pi^{C_2}_{n,n} \rightarrow \pi_0$.  We also recover
results of Mahowald and Ravenel on the Mahowald root invariants of 
the elements $2^k$.
\end{abstract}

\date{\today}

\maketitle

\section{Introduction}
\label{sctn:intro}

The goal of this article is to study some phenomena in the
$C_2$-equivariant stable homotopy groups.
Let $\R^{n,k}$ be the $n$-dimensional real representation of $C_2$
in which the nonidentity element of
$C_2$ acts as $-1$ on the last $k$ coordinates (and trivially on the first $n-k$),
and let $S^{n,k}$ be its one-point compactification.
Then $\pi^{C_2}_{n,k}$ is the set of 
$2$-completed $C_2$-equivariant
stable homotopy classes of maps $S^{n,k} \rtarr S^{0,0}$.
In this article, we are primarily concerned with the groups
$\pi^{C_2}_{k,k}$.

The classical Hopf map $\eta_\cl:S^3\rtarr S^2$ can be modeled as the defining quotient map $\C^2-\{0\} \rtarr \C\bP^1$ for complex projective space. When we remember the action of $C_2$ via complex conjugation, this represents a $C_2$-equivariant stable map $\eta$ in $\pi^{C_2}_{1,1}$. 
Classically, $\eta_\cl$ is nilpotent in the stable homotopy ring, as is every element in positive stems \cite{N}. 
However, the equivariant Hopf map $\eta$ is not nilpotent
because $\eta$ induces the non-nilpotent element $-2$ on 
geometric fixed points.
(The distinction between $2$ and $-2$ depends on choices of 
orientations and is inconsequential to the argument.)
We will concern ourselves with phenomena associated to the non-zero 
elements $\eta^k$ in $\pi^{C_2}_{k,k}$.

Because the fixed points of the representation sphere $S^{k,k}$ consist
of two points, the geometric fixed point homomorphism takes the form
$\phi:\pi_{k,k}^{C_2} \rtarr \pi_0\iso \Z$.
Bredon \cite{Br} and Landweber \cite{L} proved that the image of $\phi$
is not in general generated by $\phi(\eta^k)$. For instance, $\phi(\eta^5) = (-2)^5=-32$, 
but $\phi(\pi^{C_2}_{5,5})=16\Z$. 
In fact, the higher powers of $\eta$ are increasingly divisible by 2 
in the $C_2$-equivariant stable homotopy groups (\autoref{etadiv2}).

Let $\rho: S^{0,0} \rtarr S^{1,1}$ be the inclusion of fixed points.
Our main result describes to what extent the powers of $\eta$ are divisible by $\rho$, from which we will deduce several other results.

\begin{thm}\label{mainthm} 
Let $k = 4j + \varepsilon \geq 1$, where $0 \leq \epsilon \leq 3$.
If $\varepsilon = 0$, then
the $C_2$-equivariant stable homotopy class $\eta^k$ is divisible by $\rho^{k-1}$ and no higher power of $\rho$. 
Otherwise, the $C_2$-equivariant stable homotopy class $\eta^{k}$ is divisible by $\rho^{4j}$ and no higher power of $\rho$. 
\end{thm}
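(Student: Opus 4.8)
The plan is to read the $\rho$-divisibility of $\eta^k$ directly off the coweight-zero part of the $C_2$-equivariant Adams spectral sequence, whose $E_\infty$-page is displayed above. Since $\eta$ is detected by $h_1$, the class $\eta^k$ is detected by $h_1^k$ on the blue $h_1$-tower, in Adams filtration $k$. In the associated graded, multiplication by $\rho$ lowers the stem by one; it is either \emph{visible}, drawn horizontally, or \emph{hidden}, raising the Adams filtration by one. The largest power of $\rho$ dividing $\eta^k$ is therefore computed by finding the longest chain of homotopy classes $\eta^k = y_0, y_1, \dots, y_m$ with $\rho\,y_{i+1} = y_i$, each link realizing one of these $\rho$-extensions, and then showing that $y_m$ admits no further division.

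For the divisibility (the lower bound on $m$) I would build this chain explicitly. It opens with a single \emph{hidden} $\rho$-extension: for every $k \ge 4$ the class $Q h_1^4 \cdot h_1^{\,k-4}$—lying in the family that is the target of the relevant Mahowald root invariant, recorded in the chart—maps onto $\eta^k$ under $\rho$, exhibiting $\eta^k$ as $\rho$ times a class of Adams filtration one lower. After this single downward jump the chain runs \emph{horizontally}, through a sequence of visible $\rho$-multiplications passing in turn through the families based at $(6,3)$, $(7,3)$, $(9,4)$, and their periodic repeats at higher filtration. The hidden jump always drops the filtration by one, and the length of the ensuing horizontal row—hence the case distinction—is dictated by where that row terminates in the chart, giving $m = k-1$ when $\varepsilon = 0$ and $m = 4j$ otherwise. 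As each link is a genuine homotopy relation, their composite presents $\eta^k = \rho^m x$ with $x$ detected by the terminal dot $y_m$.

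For the upper bound I must show the terminal quotient $x$ is \emph{not} divisible by $\rho$. Here I would restrict to the underlying nonequivariant sphere: restriction sends $\rho \mapsto 0$ and $\eta \mapsto \eta_\cl$, so any $\rho$-divisible class has trivial nonequivariant restriction, and it suffices to identify $x|_e \in \pi_{k+m}$ with a nonzero class. For $k \le 3$ this settles everything at once, since $\eta_\cl^k \neq 0$ shows $\eta^k$ itself is not $\rho$-divisible. For $k \ge 4$, where $\eta_\cl^k = 0$, the nonzero restriction must instead be read off the terminal dot $y_m$—for instance $y_m$ restricts to $h_0^2 h_3$ in stem $7$ when $k = 4$—and certifying that each terminal dot restricts nontrivially is the decisive step; where it should fail, one falls back on the completeness of the $E_\infty$-page, which leaves no class in position to continue the chain.

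The main obstacle is the bookkeeping of hidden $\rho$-extensions: both the initial hidden jump that \emph{creates} the divisibility and the possibility of a stray hidden extension into a terminal dot that would secretly \emph{prolong} a chain. I would control these by comparison with the connective real $K$-theory spectrum $ko_{C_2}$, whose coweight-zero $E_\infty$-chart also appears above. The unit map $S^{0,0} \to ko_{C_2}$ carries $\eta$, $\rho$, and the entire $h_1$-tower faithfully, so the cleaner structure of $ko_{C_2}$ both certifies the hidden $\rho$-extensions and helps pin down the nonequivariant restrictions of the terminal classes, while $h_1$-linearity propagates the conclusion uniformly across each periodicity block. Throughout, the delicate requirement is that the $E_\infty$-computation be complete in this range, so that no unaccounted class or hidden extension can lengthen any chain.
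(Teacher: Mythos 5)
Your lower bound matches the paper's: $\eta^k$ is detected by $h_1^k$, the coweight-$0$ chart (\autoref{prop:ExtC2-BL}) supplies the visible $\rho$-tower under $Q h_1^k$, and the single hidden extension from $Qh_1^k$ to $h_1^k$ is exactly \autoref{lem:rho-hidden}. But note that the clean justification for that hidden extension is the half of \autoref{prop:rho-div} you never state: the cofiber sequence $(C_2)_+ \rtarr S^{0,0} \rtarr S^{1,1}$ gives $\ker U = \operatorname{im} \rho$, so $(\eta_\cl)^k = 0$ for $k \geq 4$ \emph{forces} $\eta^k$ to be a $\rho$-multiple, and $Qh_1^k$ is the only possible detector of a quotient. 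You quote only the easy containment $\operatorname{im}\rho \subseteq \ker U$; it is the converse that produces the divisibility. (Minor slip: for $k=4$ the terminal class restricts to a class detected by $h_0^3h_3$, not $h_0^2h_3$.)

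The genuine gap is in your upper bound. Your primary route --- certify that the terminal class $x$ satisfies $U(x) \neq 0$ --- is circular: $x$ is detected by a negative-cone class $\frac{Q}{\rho^j}h_1^k$, and $\Ext_{NC}$ maps to zero in $\Ext_\cl$, so nothing in the charts computes $U(x)$ directly; in the paper, $U(x)\neq 0$ is a \emph{consequence} of non-$\rho$-divisibility via $\ker U = \operatorname{im}\rho$ (this is how \autoref{thm:U} is proved), not an input. Worse, in the $\varepsilon = 0$ case the terminal class lies in stem $8j-1 \equiv 7 \pmod 8$, where $\pi_* ko = 0$, so the $ko_{C_2}$ comparison cannot certify that restriction either. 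Your fallback --- that completeness of the $E_\infty$-page ``leaves no class in position to continue the chain'' --- fails because a hidden $\rho$-extension sourced in the uncomputed shaded region could prolong the chain; this is precisely the danger the paper flags before \autoref{lem:no-rho-extn}. And your appeal to the ``cleaner structure'' of $ko_{C_2}$ points the wrong way: in $\pi_{*,*}ko_{C_2}$ there \emph{are} $\rho$-extensions continuing past the terminal dots, with sources detected by $\frac{Q}{\rho^{4k-4}}h_1^{4k-1}$ and $\frac{Q}{\rho^{4k-1}}h_1^{4k}$, so comparison alone does not stop the chain. The decisive step you are missing is the paper's: those $ko_{C_2}$ classes support infinite towers of $h_0$-multiplications at $E_\infty$, while the sphere's $E_\infty$-page has no such towers in the relevant degrees, so they cannot lie in the image of the unit map --- that is what rules out all hidden $\rho$-extensions into the Bredon-Landweber region and yields ``no higher power of $\rho$.''
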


Our primary tool for studying $C_2$-equivariant
stable homotopy groups is the equivariant Adams spectral sequence
\cite{G} \cite{HK}.
The {\bf Bredon-Landweber region} refers to 
the subgroups of $\pi^{C_2}_{k,k}$ that are detected in 
Adams filtration greater than $\frac{1}{2}k - 1$.
This region is displayed in the top part of \autoref{AdamsEinfChart}.
We will show that the Bredon-Landweber region 
is additively generated by
the elements $\rho^i \eta^k$, together
with elements $\alpha$ such that
$\rho^i \alpha = \eta^k$ for some $i$.

We recover the following theorem of Landweber \cite[Theorem~2.2]{L}, which was originally conjectured by Bredon \cite{Br}. 

\begin{cor} 
\label{cor:fixed-pt-image}
Let $k = 8j + \varepsilon \geq 1$, with $0 \leq \varepsilon \leq 7$.
The image of the geometric fixed points homomorphism $\pi^{C_2}_{k,k}\xrtarr{\phi} \pi_0$ is generated by 
\[\begin{cases}
2^{4j + 1} & \text{if} \quad \varepsilon = 0. \\
2^{4j + \varepsilon} & \text{if} \quad 1 \leq \varepsilon \leq 4. \\
2^{4j + 4} & \text{if} \quad 5 \leq \varepsilon \leq 7. \\
\end{cases}\]
\end{cor}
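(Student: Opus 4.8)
The plan is to reinterpret the geometric fixed points map in terms of $\rho$-divisibility and then read the answer off \autoref{mainthm}. The key point is that geometric fixed points inverts $\rho$: the homomorphism $\phi\colon \pi^{C_2}_{k,k}\rtarr \pi_0$ is the canonical map into the colimit
\[
\pi^{C_2}_{k,k}[\rho^{-1}] = \mathrm{colim}\bigl(\pi^{C_2}_{k,k}\xrtarr{\rho}\pi^{C_2}_{k-1,k-1}\xrtarr{\rho}\cdots\bigr)\iso\pi_0 .
\]
In particular $\phi(\rho)=1$ is a unit and $\phi(\eta)=-2$, so $\phi(\eta^\ell)=(-2)^\ell$ has $2$-adic valuation exactly $\ell$. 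Since $\phi$ is a ring homomorphism with $\phi(\rho)=1$, any relation $\rho^i\alpha=\eta^\ell$ forces $\phi(\alpha)=\phi(\eta^\ell)=(-2)^\ell$.

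First I would pin down the subgroup of $\pi^{C_2}_{k,k}$ on which $\phi$ is nonzero. Because $\phi$ is $\rho$-localization, a class $\beta$ has $\phi(\beta)\neq 0$ precisely when $\rho^i\beta\neq 0$ for all $i$, i.e.\ when $\beta$ supports an infinite $\rho$-tower; these are exactly the classes of the Bredon-Landweber region. By the structural description announced in \autoref{sctn:intro}, that region is additively generated by the products $\rho^i\eta^k$ together with the $\rho$-divisors $\alpha$ satisfying $\rho^i\alpha=\eta^k$. Among these generators, those lying in the single group $\pi^{C_2}_{k,k}$ are the products $\rho^{\ell-k}\eta^\ell$ with $\ell\geq k$, each of $\phi$-valuation $\ell\geq k$, together with the $\rho$-divisors $\alpha\in\pi^{C_2}_{k,k}$ with $\rho^i\alpha=\eta^{k-i}$, each of $\phi$-valuation $k-i\leq k$.

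Consequently the image of $\phi$ is the subgroup of $\pi_0\iso\Z$ generated by the class of smallest $2$-adic valuation among these; as a subgroup of $\Z$ is determined by its minimal valuation, the image is $2^{N_k}\Z$ with
\[
N_k=\min\{\,k-i : \eta^{k-i}\text{ is divisible by }\rho^i\,\}=\min\{\,\ell\geq 1 : d(\ell)+\ell\geq k\,\},
\]
where $d(\ell)$ is the largest power of $\rho$ dividing $\eta^\ell$. Now \autoref{mainthm} evaluates $d(\ell)$: writing $\ell=4j+\varepsilon$ we get $d(\ell)+\ell=8j-1$ when $\varepsilon=0$ (with $j\geq 1$) and $d(\ell)+\ell=8j+\varepsilon$ when $1\leq\varepsilon\leq 3$. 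I would finish with a direct case analysis on $k\bmod 8$: the function $\ell\mapsto d(\ell)+\ell$ is strictly increasing with values $1,2,3,7,9,10,11,15,17,\dots$, and locating $k$ within this list yields the minimal $\ell$, which unwinds to the three stated cases $4j+1$, $4j+\varepsilon$, and $4j+4$.

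The main obstacle is not the arithmetic but the structural input from the earlier sections. One needs both directions of the Bredon-Landweber description: that every diagonal class surviving $\rho$-inversion lies in the region spanned by $\rho$-towers through powers of $\eta$ (so that no unexpected class of smaller valuation contributes to the image), and that the asserted $\rho$-divisors $\alpha$ really exist with exactly the claimed divisibility. Granting this together with the sharp count of \autoref{mainthm}, the corollary is a purely arithmetic consequence.
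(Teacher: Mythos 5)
Your proposal is correct and follows essentially the same route as the paper: you use $\phi(\rho)=1$ and $\phi(\eta)=-2$, observe that $\phi$ vanishes on classes that are not $\rho$-periodic (i.e., detected below the Bredon-Landweber region), and then extract the minimal $2$-adic valuation in the image from the sharp $\rho$-divisibility of $\eta^\ell$ given by \autoref{mainthm}. The paper's proof is just a terser version of this, leaving the $\mymod 8$ case analysis that you carry out explicitly (correctly, with $\ell\mapsto d(\ell)+\ell$ taking values $1,2,3,7,9,10,11,15,\dots$) to the reader.
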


Similarly, we have:

\begin{cor}\label{etadiv2} 
Let $k = 8j + \varepsilon \geq 5$, with $0 \leq \varepsilon \leq 7$.
The $C_2$-equivariant stable homotopy class $\eta^k$ is divisible by 
\[ \begin{cases} 
2^{4j-1} & \text{if} \quad \varepsilon = 0, \\ 
2^{4j} & \text{if} \quad 1 \leq \varepsilon \leq 4, \\
2^{4j+\varepsilon-4} & \text{if} \quad 5 \leq \varepsilon \leq 7, \\
\end{cases} \]
and no higher power of $2$.
\end{cor}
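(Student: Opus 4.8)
The plan is to deduce this from \autoref{cor:fixed-pt-image} by using the geometric fixed points homomorphism to detect $2$-divisibility, exploiting that $\phi(\eta) = -2$ and hence $\phi(\eta^k) = (-2)^k$. Write $\mathrm{im}(\phi) = 2^{m}\Z \subseteq \pi_0 \iso \Z$ for the image computed in \autoref{cor:fixed-pt-image}, so that $m = 4j+1$, $m = 4j+\varepsilon$, or $m = 4j+4$ in the three respective cases. A direct check shows that in every case the claimed $2$-divisibility of $\eta^k$ equals $k - m$. Thus the entire statement reduces to proving that the largest power of $2$ dividing $\eta^k$ is exactly $2^{k-m}$, where $2^m$ generates the image of $\phi$.

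For the upper bound, suppose $\eta^k = 2^a \beta$ for some $\beta \in \pi^{C_2}_{k,k}$. Since $\phi$ is a ring homomorphism, $(-2)^k = \phi(\eta^k) = 2^a \phi(\beta)$, and $\phi(\beta) \in 2^m\Z$ forces $a + m \leq k$, i.e.\ $a \leq k - m$. Hence no power of $2$ larger than $2^{k-m}$ divides $\eta^k$; this half is purely formal, using only multiplicativity of $\phi$ together with the image computation.

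The substance is the lower bound: I must produce $\beta \in \pi^{C_2}_{k,k}$ with $\eta^k = 2^{k-m}\beta$. Choosing $\beta_0$ with $\phi(\beta_0) = 2^m$ (possible since $2^m$ generates the image), the difference $\gamma = \eta^k \mp 2^{k-m}\beta_0$ lies in $\ker\phi$, and it suffices to show $\gamma$ is itself divisible by $2^{k-m}$. I would establish this from the explicit $E_\infty$-page of the $C_2$-equivariant Adams spectral sequence used to prove \autoref{mainthm}: in the Bredon--Landweber region the class detecting $\eta^k$ sits at the top of an $h_0$-tower (multiplication by $2$) of height exactly $k-m$, whose bottom class maps to a generator of $\mathrm{im}(\phi)$. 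Alternatively, one can transport the $\rho$-divisibility statement of \autoref{mainthm} into a $2$-divisibility statement by applying $\phi$ to the relation $\rho^2\eta = -2\rho$, equivalently $\rho^{k+1}\eta^k = (-2)^k\rho$, which encodes precisely the interchange of $\rho$ and $2$ after inverting the Euler class.

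The main obstacle is exactly this lower bound. The geometric fixed points map has a nontrivial kernel (arising from transfers and $\rho$-power-torsion), so $\phi$ cannot simply be inverted; the division of $\eta^k$ by $2^{k-m}$ must be realized honestly. Concretely, the real work is to verify on the $E_\infty$-page that there is no hidden $2$-extension obstructing the lift, i.e.\ that the $h_0$-tower below the class detecting $\eta^k$ genuinely has the height predicted by the valuation of $\mathrm{im}(\phi)$. Once this structural fact is in hand, combining it with the formal upper bound completes the argument, and the case analysis in $\varepsilon$ matches the three cases of \autoref{cor:fixed-pt-image} under the identification $a = k - m$.
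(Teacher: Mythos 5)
Your upper bound is correct and genuinely formal: since $\phi$ is a ring homomorphism with $\phi(\eta^k)=(-2)^k$, any factorization $\eta^k = 2^a\beta$ forces $a \leq k-m$ once $\mathrm{im}(\phi)=2^m\Z$ is known from \autoref{cor:fixed-pt-image}, and your arithmetic identifying the claimed divisibility with $k-m$ checks out in all three cases. But the lower bound, which you rightly call the substance, is not established by either mechanism you offer. First, the structural claim about the $E_\infty$-page is false: in coweight $0$ one has $\Ext_\R = \F_2[h_0,h_1,\rho]/(h_0h_1,\rho h_0)$ (\autoref{lem:ExtR-mw0}), so $h_1^k$ supports no $h_0$-multiplication at all, and there is no $h_0$-tower of height $k-m$ beneath it; the only $h_0$-tower in the Bredon--Landweber region sits on the $0$-stem. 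Multiplication by $2$ on the classes detected by $\frac{Q}{\rho^j}h_1^n$ strictly raises Adams filtration, so the $2$-divisibility of $\eta^k$ is carried entirely by \emph{hidden} $2$-extensions, and your framing (``verify there is no hidden $2$-extension obstructing the lift'') has the logic backwards: hidden extensions are exactly what must be produced. Second, your alternative of applying $\phi$ to $\rho^2\eta=-2\rho$ is vacuous. The relation itself is fine (it is $\rho\cdot(2+\rho\eta)=0$), but since $\phi(\rho)=1$ and $\phi(\eta)=-2$, both sides of $\rho^{k+1}\eta^k=(-2)^k\rho$ map to $(-2)^k$, giving no information; more generally $\phi$ can never certify divisibility in $\pi^{C_2}_{k,k}$, because its kernel (the $\rho$-power-torsion part) is precisely where the relevant classes live, so $\phi$ only ever yields the upper bound you already have. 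Likewise, your reduction to showing $\gamma = \eta^k \mp 2^{k-m}\beta_0$ is divisible by $2^{k-m}$ is no reduction: that statement is equivalent to the original claim, and knowing $\gamma \in \ker\phi$ contributes nothing toward $2$-divisibility.

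The missing ingredient is the relation in the \emph{un}-$\rho$-inverted homotopy ring: $h_0$ detects $2+\rho\eta$ (this is \cite[Section 8]{DI}, which the paper cites), so on homotopy classes detected by $h_0$-torsion elements, multiplication by $2$ coincides with multiplication by $-\rho\eta$. This converts \autoref{mainthm} directly into both bounds at once: $\eta^k$ is divisible by $2^a$ if and only if $\eta^{k-a}$ is divisible by $\rho^a$, since from $\eta^{k-a}=\rho^a\beta$ one gets $\eta^k = \eta^a\rho^a\beta = (-2)^a\beta$, and conversely. That equivalence plus \autoref{mainthm} is the paper's entire proof. Your instinct that $\rho$ and $2$ interchange is the right one, but the usable form of the interchange is the relation $2\eta=-\rho\eta^2$ acting on the actual homotopy classes in the Bredon--Landweber region, not its image after inverting the Euler class.
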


The proofs of \autoref{mainthm}, \autoref{cor:fixed-pt-image},
and \autoref{etadiv2} appear in \autoref{sctn:Adams}.
First, we must carry out some $C_2$-equivariant Adams spectral
sequence calculations.

Our calculations can also be used to compute the classical
Mahowald invariants of $2^k$ for all $k \geq 0$ 
(see \autoref{thm:Mahowald-inv}).
We directly apply the Bruner-Greenlees formulation \cite{BG}
of the Mahowald invariant that uses $C_2$-equivariant homotopy groups.
These invariants were previously established by
Mahowald and Ravenel \cite{MR} using entirely different methods.

The charts were created using Hood Chatham's {\tt spectralsequences} package.

\section{Notation}
\label{sec:background}

We continue with notation from \cite{DI} and \cite{GHIR} as follows.
\begin{enumerate}
\item
$\bM^\R=\F_2[\tau,\rho]$ 
is the motivic cohomology of $\R$ with $\F_2$ coefficients, where $\tau$ and $\rho$ have bidegrees $(0,1)$ and $(1,1)$, respectively.
\item
$\bM^{C_2}$ is the bigraded $C_2$-equivariant Bredon cohomology of a point with coefficients in the constant Mackey functor $\underline{\F}_2$. 
\item
$\cA^{C_2}$ is the 
$C_2$-equivariant mod 2 Steenrod algebra, using coefficients $\underline{\F}_2$, and
$\cA^{C_2}(1)$ is the $\bM^{C_2}$-subalgebra of $\cA^{C_2}$
generated by $\mathrm{Sq}^1$ and $\mathrm{Sq}^2$.
\item
$\Ext_\cl$, $\Ext_\C$, $\Ext_\R$, and $\Ext_{C_2}$ are the
cohomologies of the classical, $\C$-motivic, $\R$-motivic, and
$C_2$-equivariant mod 2 Steenrod algebras respectively.
These objects are the $E_2$-pages of Adams spectral sequences.
\item
$\pi_{*,*}^{C_2}$ and $\pi_*$ are the stable homotopy rings of the
2-completed $C_2$-equivariant sphere spectrum and the
2-completed classical sphere spectrum respectively.
\end{enumerate}

We will use some specific familiar elements of the Adams $E_2$-page.
These elements lie near the ``Adams edge" at the top of the Adams
chart along a line of slope $1/2$.
Our notation for these elements is standard.  They include
$P^k h_1$, $P^k h_1^2$, $P^k h_1^3$, $P^k c_0$, $P^k h_1 c_0$,
$P^k h_2$, and $P^k h_0 h_2$.  
In addition, we will consider the elements
$P^k h_0 h_3$, $P^k h_0^2 h_3$, and $P^k h_0^3 h_3$.
These slightly non-standard (but technically correct) 
names conveniently refer to a well-understood,
regular family of elements in the Adams $E_2$-page.
They are the top three elements in a tower of $h_0$-multiplications
in stems congruent to $7$ modulo $8$.
For more details,
we refer the reader to any Adams chart, 
such as \cite{Icharts} or \cite[Figure A3.1]{R}.

We follow \cite{Istems} in grading $\Ext$ groups
according to  $(s,f,w)$, where:
\begin{enumerate}
\item
$f$ is the Adams filtration, i.e., the homological degree.
\item
$s+f$ is the internal degree, i.e., corresponds to the
first coordinate in the bidegree of the Steenrod algebra.
\item
$s$ is the stem, i.e., the internal degree minus
the Adams filtration.
\item
$w$ is the weight.
\end{enumerate}

Following this grading convention,
the elements $\tau$ and $\rho$, as elements of $\Ext_\R$, have degrees
$(0,0,-1)$ and $(-1,0,-1)$ respectively.
We will also often refer to the \textbf{coweight}, 
which is defined to be $c = s-w$. 
Since both $\eta$ and $\rho$ have coweight $0$, 
the Bredon-Landweber region consists entirely of elements of
coweight $0$. The coweight is also called the Milnor-Witt degree in the motivic context (\cite{DI},\cite{GI}). 

\section{The $\rho$-Bockstein spectral sequence}
\label{sctn:rho-Bock}

As an $\bM^\R$-module, 
the equivariant cofficient ring $\bM^{C_2}$ splits as 
$\bM^{C_2} \iso \bM^\R \oplus NC$, where $NC$ is the ``negative cone''. 
The negative cone has $\F_2$-basis $\{ \frac{\gamma}{\rho^j \tau^{k+1}}\}$, where $j,k\geq 0$ and $\frac{\gamma}{\tau}$ lives in degree $(0,0,2)$.
See \cite[Section 2.1]{GHIR} for more details.
This splitting of $\bM^{C_2}$ induces a splitting
\[
\Ext_{C_2} \iso \Ext_\R \oplus \Ext_{NC},
\]
where $\Ext_{NC}=\Ext_\R(NC,\bM^\R)$.
The splitting of $\bM^{C_2}$
also yields a splitting for the Bockstein spectral sequence, and we
follow \cite[Proposition 3.1]{GHIR} in writing $E_1^+$ for the summand of the Bockstein $E_1$-term which converges to $\Ext_\R$ and $E_1^-$ for the summand  which converges to $\Ext_{NC}$.

The $\R$-motivic $\rho$-Bockstein spectral sequence (\cite{H,DI}) takes the form
\[ E_1^+ = \Ext_\C[\rho] \Rightarrow \Ext_\R.\]
The groups $\Ext_\R$ are computed for low coweights in \cite{DI}. In coweight $0$, $E_1^+$ is $\F_2[h_0,h_1,\rho]/(h_0h_1)$, and the only relevant Bockstein differential is $d_1(\tau) = \rho h_0$, giving 

\begin{lem}[\cite{DI}]
\label{lem:ExtR-mw0}
 $\Ext_\R$ in coweight 0 is $\F_2[h_0,h_1,\rho]/(h_0h_1,\rho h_0)$.
\end{lem}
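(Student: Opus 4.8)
The plan is to run the $\rho$-Bockstein spectral sequence in coweight $0$, starting from the $E_1$-page $E_1^+ = \Ext_\C[\rho]$ and computing through to $E_\infty = \Ext_\R$. The first task is to identify the input. In coweight $0$, the relevant classes of $\Ext_\C$ are the $h_0$-tower and the $h_1$-tower emanating from the unit, subject to the classical relation $h_0 h_1 = 0$; adjoining the polynomial generator $\rho$ (which has coweight $0$) and recording the statement of the lemma, I expect $E_1^+$ in coweight $0$ to be $\F_2[h_0, h_1, \rho]/(h_0 h_1)$. I would begin by verifying this identification degree by degree, using that $\tau$ raises coweight by $1$ and so does not contribute in coweight $0$, while $\rho$ and the $h_i$ preserve coweight.

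The heart of the argument is the single Bockstein differential. The excerpt hands us $d_1(\tau) = \rho h_0$, and since $\tau$ has coweight $1$ this differential maps out of coweight $1$ into coweight $0$; by the Leibniz rule it governs the entire coweight-$0$ computation. First I would compute the image of $d_1$: applying $d_1$ to a general element $\tau \cdot x$ with $x \in \F_2[h_0, h_1, \rho]$ gives $\rho h_0 \cdot x$, so the image is exactly the ideal $(\rho h_0)$ inside the coweight-$0$ part of $E_1^+$. Next I would compute the kernel of $d_1$ on coweight $0$: since coweight-$0$ classes contain no $\tau$, they are all $d_1$-cycles, so the kernel is all of $\F_2[h_0, h_1, \rho]/(h_0 h_1)$. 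Taking homology then yields
\[
E_2^+ = \F_2[h_0, h_1, \rho]/(h_0 h_1, \rho h_0)
\]
in coweight $0$.

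The remaining step is to argue that the spectral sequence collapses at $E_2$ in this coweight, i.e.\ that there are no higher differentials $d_r$ for $r \geq 2$ affecting coweight $0$. Here I would appeal to the fact that any such differential lowers coweight by exactly $1$ (as $\rho$-Bockstein differentials do) while the $E_1$-page is concentrated so that, after the $d_1$-homology is taken, the surviving coweight-$0$ classes $h_0^a$, $h_1^b$, and $\rho^c$ have no possible targets of the correct internal degree and filtration in lower coweight. Since $\rho^c$ and $h_1^b$ are permanent cycles for formal reasons (they come from $\Ext_\R$ directly, being built from $\rho$ and the motivic Hopf element), and the $h_0$-tower is killed down to its bottom class by the $d_1$ already computed, there is nothing left to support a later differential. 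This gives $\Ext_\R$ in coweight $0$ equal to $E_2^+$, as claimed.

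\textbf{The main obstacle} I anticipate is the collapse argument: while the $d_1$ computation is a direct application of Leibniz, ruling out all higher differentials requires either a clean degree/coweight counting argument or a comparison with the known structure of $\Ext_\R$ from \cite{DI}. The safest route is to observe that the three surviving families $\rho^c$, $h_1^b$, $h_0$ either lift visibly to $\Ext_\R$ or are forced to be permanent by multiplicative structure, so that the $E_2$-page already has the right size and no room for cancellation remains.
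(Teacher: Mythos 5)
Your overall route---identify $E_1^+=\Ext_\C[\rho]$ in coweight $0$ as $\F_2[h_0,h_1,\rho]/(h_0h_1)$, apply $d_1(\tau)=\rho h_0$ via the Leibniz rule, and argue collapse---is the same route the paper takes (its ``proof'' is essentially a citation of \cite{DI} together with exactly this sketch). The $d_1$ computation is right as far as it goes, but your collapse argument has a genuine gap. A smaller issue first: the image of $d_1$ in coweight $0$ cannot be computed only from elements of the form $\tau x$; the coweight-$1$ part of $E_1^+$ also contains the family $\rho^j h_0^{\varepsilon}h_2$ with $\varepsilon\le 2$, and the differential $d_1(\rho^{j-1}h_0h_2)=\rho^j h_1^3$ is degree-theoretically possible (it has the right $(s,f,w)$ and raises $\rho$-filtration by $1$), so it must be ruled out, e.g.\ by Leibniz from $d_1(h_0)=0$ (its target lies in coweight $-1$, where $E_1^+$ vanishes) and $d_1(h_2)=0$ (no target of $\rho$-filtration $\ge 1$). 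The serious issue is the higher differentials. Your coweight argument correctly shows the coweight-$0$ classes are permanent \emph{cycles} (coweight $-1$ of $E_1^+$ vanishes), but for the $\rho$-divisible survivors the danger is being a \emph{boundary}, and there are degree-correct candidates: $d_2(\rho^{j-2}\tau h_1^{k-1})=\rho^j h_1^{k}$ is possible for all $j\ge 2$, e.g.\ $d_2(\tau h_1)=\rho^2 h_1^2$, both classes sitting in degree $(0,2,0)$. Neither of your offered reasons closes this. ``Lifts visibly to $\Ext_\R$'' only says $\rho^j h_1^k$ is a product of nonzero classes of $\Ext_\R$, and such products can vanish---indeed $\rho h_0=0$, the very relation you are proving, is an instance. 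And ``the $E_2$-page already has the right size'' presupposes the answer, which is circular in a blind proof.

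To close the gap you need a genuine input, and two are available: (i) the $\rho$-inverted comparison $\Ext_\R[\rho^{-1}]\cong\Ext_{\cl}[\rho^{\pm 1}]$ of \cite[Theorem~4.1]{DI} (the same tool the paper deploys in \autoref{lem:d3}), under which $h_1$ corresponds to the non-nilpotent classical $h_0$, so $\rho^j h_1^k\neq 0$ in $\Ext_\R$ for all $j,k\ge 0$ and these classes can never be Bockstein boundaries; or (ii) show $\tau h_1$ is a permanent cycle (as in \autoref{lem:v1-perm}, via Massey products and May convergence) and apply the Leibniz rule to force $d_2(\rho^{j-2}\tau h_1^{k-1})=0$. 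Separately, your statement that ``the $h_0$-tower is killed down to its bottom class by the $d_1$'' is wrong as written: every power $h_0^k$ survives to $E_\infty$ (classes of $\rho$-filtration $0$ are never boundaries); what $d_1$ kills is the set of $\rho$-multiples $\rho^i h_0^j$ with $i,j\ge 1$, which is precisely how the relation $\rho h_0=0$ arises.
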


The calculation of $\Ext_{NC}$ in coweight $0$ is much
more complicated.
We have a short exact sequence \cite[Prop 3.1]{GHIR}
\begin{equation}
\bigoplus_{s\geq 0} \frac{\F_2[\tau]}{\tau^\infty} 
\left\{\frac\gamma{\rho^s}\right\} \otimes_{\F_2[\tau]} \Ext_{\C}
\rightarrow E_1^- \rightarrow
\bigoplus_{s\geq 0} \Tor_{\F_2[\tau]}
\left( \frac{\F_2[\tau]}{\tau^\infty} \left\{\frac\gamma{\rho^s}\right\},
\Ext_{\C}\right),
\label{E1NCses}
\end{equation}
which we abbreviate as
\[ \gamma E_1^- \rtarr E_1^- \rtarr Q E_1^-.\]
As explained in \cite[Remark~3.5]{GHIR}, 
for each class $x$ in $\Ext_\C$ such that $\tau x = 0$, 
we get a class $Q x$ in $Q E_1^-$, and this element is infinitely
divisible by $\rho$.
The $\tau$-torsion elements of $\Ext_\C$ in coweight 0 are $h_1^k$ for $k\geq 4$, so these give rise to infinitely $\rho$-divisible classes $Qh_1^k$.  This describes $QE_1^-$ in coweight $0$.

We now describe $\gamma E_1^-$ in coweight $0$.
First note that $\frac{\gamma}{\tau^i}$ has coweight
$-i-1$.
Now let $x$ be a class in $\Ext_\C$ that is $\tau$-free
and not divisible by $\tau$, and let $c \geq 0$ be the coweight
of $x$.
If $c \geq 2$, 
then $\frac{\gamma}{\tau^{c-1}} x$ is an element of $\gamma E_1^-$
in coweight $0$
that is infinitely divisible by $\rho$.
When $c \leq 1$, there is no corresponding
element of $\gamma E_1^-$ in coweight $0$.

This description of $E_1^-$ is incomplete in the sense that it depends
on the $\tau$-free part of $\Ext_\C$, which is only known in a range.
The $\tau$-free part of $\Ext_\C$ corresponds precisely to 
classical $\Ext_{\cl}$ \cite[Proposition 2.10]{Istems}.  
In a range, information about
$\Ext_{\cl}$ can be obtained from an $\Ext$ chart, such as
\cite{Icharts} or \cite[Figure A.3.1]{R}.

In order to rule out certain Bockstein differentials later, we need some
structural results for the Bockstein spectral sequence.

\begin{prop}
\label{prop:rho-cofree-diff}
Let $x$ be an element of $E_r^-$ such that
$d_r(x)$ is non-zero.  Then $x$ and $d_r(x)$ are both
infinitely divisible by $\rho$ in $E_r^-$.
\end{prop}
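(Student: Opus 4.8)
The plan is to reduce everything to the page $E_1^-$, where the negative-cone description makes infinite $\rho$-divisibility transparent, and then to propagate this property through the spectral sequence. The starting observation is that $\rho$ is a permanent cycle, so each Bockstein differential $d_r$ is $\rho$-linear; in particular $d_r(\rho y) = \rho\, d_r(y)$, so the image of $d_r$ is a quotient of its source as an $\F_2[\rho]$-module. (Since the splitting $E_1 = E_1^+ \oplus E_1^-$ is a splitting of $\F_2[\tau,\rho]$-modules, the differentials respect it, so $d_r(x)$ indeed lies in $E_r^-$.)

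First I would show that every element of $E_1^-$ is infinitely $\rho$-divisible. By the short exact sequence $\gamma E_1^- \to E_1^- \to Q E_1^-$ of \Ref{E1NCses}, it suffices to know this for the sub and the quotient. For $Q E_1^-$ this is exactly the statement, recalled above, that each class $Qx$ with $\tau x = 0$ is infinitely divisible by $\rho$; for $\gamma E_1^-$ it follows from the fact that each generator $\frac{\gamma}{\rho^s}$ equals $\rho$ times $\frac{\gamma}{\rho^{s+1}}$, so every basis element $\frac{\gamma}{\rho^s \tau^{k+1}}$ is infinitely $\rho$-divisible. Infinite $\rho$-divisibility passes through extensions by a short diagram chase: given $e \in E_1^-$, lift a $\rho$-divisor of its image in $Q E_1^-$, subtract, and divide the resulting element of $\gamma E_1^-$. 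This already proves the proposition for $r = 1$, since both $x$ and $d_1(x)$ lie in $E_1^-$.

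For $r \ge 2$ I would argue by induction on $r$. Every element of $E_1^-$ is $\rho$-power torsion (the basis element $\frac{\gamma}{\rho^s\tau^{k+1}}$ is annihilated by $\rho^{s+1}$), and it is $\rho$-divisible, so $E_1^-$ assembles into a direct sum of cofree $\rho$-towers $\F_2[\rho]/\rho^\infty$. The half of the statement concerning $d_r(x)$ is then easy: the image of any differential is a $\rho$-linear quotient of an infinitely divisible module, hence again infinitely divisible. For $x$ itself I would lift a cycle representative $\tilde x \in E_1^-$, write $\tilde x = \rho\, \tilde w$ using divisibility on $E_1^-$, and attempt to transport $\tilde w$ back up the spectral sequence to exhibit $x$ as $\rho$ times a class of $E_r^-$.

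The main obstacle is precisely this last transport, because passing to homology does not preserve infinite $\rho$-divisibility. Since the differentials are only $\rho$-linear and $E_1^-$ has $\rho$-torsion (the socle of each cofree tower), the relation $\rho\, d_i(\tilde w) = d_i(\tilde x) = 0$ forces $d_i(\tilde w)$ to be $\rho$-torsion but not necessarily zero, and the divisor $\tilde w$ is only well defined modulo the socle. Consequently a truncated, finite tower of non-divisible classes can appear as a kernel on a later page, and the heart of the matter is to show that such reduced classes are permanent cycles that neither support nor receive differentials. I would handle this by leveraging the cofree $\F_2[\rho]/\rho^\infty$ structure to absorb the socle-valued obstruction $d_i(\tilde w)$ into an adjustment of $\tilde w$, producing a genuine $\rho$-divisor that survives to $E_r^-$; making this adjustment available uniformly, rather than in each individual tower, is the step I expect to require the most care.
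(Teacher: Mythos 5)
Your proposal has a genuine gap, and in fact both halves are affected. The ``easy half'' --- that $d_r(x)$ is infinitely divisible because the image of $d_r$ is a $\rho$-linear quotient of an infinitely divisible module --- silently assumes that $E_r^-$ itself is infinitely divisible, which is true only for $r=1$. For larger $r$ the pages contain truncated towers: for instance, after the differential $d_{4k-1}$ of \autoref{d4km1Qh14}, the classes $\frac{Q}{\rho^j}h_1^{4k}$ with $j \leq 4k-2$ survive as a finite tower admitting no infinite $\rho$-division (these are exactly the classes appearing in \autoref{prop:ExtC2-BL}). More seriously, your strategy for $x$ itself --- lift a representative to $E_1^-$, divide by $\rho$ there, transport back --- never invokes the hypothesis that $d_r(x) \neq 0$, so it would prove that \emph{every} class of $E_r^-$ is infinitely divisible, which is false: a class in one of those truncated towers also lifts to a cycle in $E_1^-$ that is infinitely divisible there, yet is not divisible on $E_r^-$. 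Consequently no amount of adjusting $\tilde w$ by socle-valued correction terms can close the argument; the ``adjustment'' step you flag as requiring the most care is not a technicality but the entire content of the proposition, and it cannot be carried out at the level of representatives without some input that distinguishes classes supporting nonzero differentials.

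The missing idea is filtration bookkeeping, which is how the paper argues. Grade $E_r^-$ by the Bockstein filtration $E_r^-[k]$, so that $d_r$ maps $E_r^-[k]$ to $E_r^-[k+r]$; on $E_1^-$ one has $E_1^-[k] = 0$ for $k > 0$, while $\rho \colon E_1^-[k] \to E_1^-[k+1]$ is an isomorphism for $k < 0$. A short induction with diagram chases shows that on $E_r^-$, multiplication by $\rho$ is injective for $0 > k > -r$ and an isomorphism for $k \leq -r$; in particular every element in filtration $\leq -r$ is infinitely divisible, and the non-divisible truncated towers can only occupy filtrations $-r < k \leq 0$. Now the hypothesis enters: $d_r(x) \neq 0$ forces $E_r^-[k+r] \neq 0$, hence $k + r \leq 0$, placing $x$ in the isomorphism range where divisibility is automatic --- and, dually, no $d_r$ can emanate from the truncated region, since its target filtration would be positive, where the page vanishes. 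Divisibility of $d_r(x)$ then follows from the $\rho$-linearity you correctly observed: writing $x = \rho^m x_m$ gives $d_r(x) = \rho^m d_r(x_m)$ for every $m$. Your instinct that the truncated towers are the crux was right; the resolution is to locate them within the filtration and let the hypothesis $d_r(x) \neq 0$ push $x$ away from them, not to repair divisibility of representatives on $E_1^-$.
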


\begin{proof}
Let $E_r^-[k]$ be the part of $E_r^-$ in Bockstein filtration
$k$.  Note that $E_1^-[k]$ is zero if $k > 0$, and that
$\rho: E_1^-[k] \rtarr E_1^-[k+1]$ is an isomorphism if $k < 0$.
The $d_r$ differential takes the form
$E_r^-[k] \rtarr E_r^-[k+r]$.

By induction,
diagram chases show that 
$\rho: E_r^-[k] \rtarr E_r^-[k+1]$ is injective if
$0 > k > -r$, and it is an isomorphism if $-r \geq k$.
In particular, if $-r \geq k$, then every element
of $E_r^-[k]$ is infinitely divisible by $\rho$.

Now let $x$ be an element of $E_r^-[k]$ such that $d_r(x)$ is
non-zero.  This implies that $E_r^-[k+r]$ is non-zero, so
$-r \geq k$, and $x$ is infinitely divisible by $\rho$.
Finally, the multiplicative structure implies that $d_r(x)$ 
must also be infinitely divisible by $\rho$.
\end{proof}

\begin{rem}
\label{rem:rho-cofree-diff}
\autoref{prop:rho-cofree-diff} is dual to
\cite[Lemma 3.4]{DI}, which shows that if $d_r(x)$ is
non-zero in $E_r^+$, then $\rho^k x$ and $\rho^k d_r(x)$ 
are non-zero in $E_r^+$ for all $k \geq 0$.
In fact, the proof of \autoref{prop:rho-cofree-diff} dualizes
line by line.
\end{rem}

\begin{prop}\label{NegConeVanish} 
The $C_2$-equivariant Bockstein $E_1$-page is zero 
in degrees $(s,f,w)$ such that
the coweight $s - w$ is negative,
the stem $s$ is positive, and 
$f > \frac{1}{2} s + \frac{3}{2}$.
\end{prop}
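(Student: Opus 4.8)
The plan is to combine the splitting $E_1 = E_1^+ \oplus E_1^-$ with the short exact sequence \Ref{E1NCses} and then reduce everything to the classical Adams vanishing line, which asserts that $\Ext_\cl$ vanishes in stem $s > 0$ and filtration $f > \frac{1}{2}s + \frac{3}{2}$. The first step is to dispose of $E_1^+$: since $E_1^+ = \Ext_\C[\rho]$ and both $\Ext_\C$ and $\rho$ lie in coweight $\geq 0$, the summand $E_1^+$ is identically zero in negative coweight. Thus throughout the region in question $E_1 = E_1^-$, and it suffices to bound the two outer terms of $\gamma E_1^- \rtarr E_1^- \rtarr Q E_1^-$ separately.

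The term $Q E_1^-$ is the easy one. Its classes $Qx$ come from $\tau$-torsion classes $x \in \Ext_\C$, and the $Q$-construction preserves coweight (for instance $Qh_1^4$ and $h_1^4$ both lie in coweight $0$). Since $\Ext_\C$ is concentrated in coweight $\geq 0$, every $\tau$-torsion class has coweight $\geq 0$, so $Q E_1^-$ is concentrated in coweight $\geq 0$ and contributes nothing in negative coweight. This reduces the proposition to a statement about $\gamma E_1^-$.

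For $\gamma E_1^-$, a typical class has the form $\frac{\gamma}{\rho^i \tau^j}\, x$ with $x$ a $\tau$-free class of $\Ext_\C$. Under the identification of the $\tau$-free part of $\Ext_\C$ with $\Ext_\cl$, the factor $x$ corresponds to a classical class of some stem $s_x$ and filtration $f$; since $\frac{\gamma}{\rho^i\tau^j}$ has stem $i$ and filtration $0$, the resulting negative-cone class lies in stem $s = s_x + i \geq s_x$ at the same filtration $f$. If $s_x > 0$, then $f > \frac{1}{2}s + \frac{3}{2} \geq \frac{1}{2}s_x + \frac{3}{2}$ forces $x = 0$ by the classical vanishing line, which is exactly what is wanted. (Note that the $\tau$-torsion families $h_1^k$ with $k \geq 4$, which sit above the classical line, have already been absorbed into the $Q$-term and so cause no trouble here.)

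The remaining case $s_x = 0$ is the crux, and I expect it to be the main obstacle. Here $x$ runs through the stem-$0$ tower $\{h_0^k\}$, for which the classical vanishing line gives no information, and naively such classes threaten to produce an unbounded $h_0$-tower in each positive stem after $\rho$-division. To rule this out I would invoke the explicit description of the negative cone in \cite{GHIR}, together with the duality of Remark~\ref{rem:rho-cofree-diff}: the stem-$0$ classes enter $E_1^-$ only through $\frac{\gamma}{\rho^i\tau^j}$ and the genuine (dual) $h_0$-action on the negative cone, and the task is to verify that this action produces no classes above the line $f = \frac{1}{2}s + \frac{3}{2}$ in positive stem. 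Pinning down this dual bookkeeping is where the real work lies; combined with the three reductions above, it yields the stated vanishing.
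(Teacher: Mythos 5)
Your reduction is the same as the paper's: the paper likewise disposes of $E_1^+$ and of $QE_1^-$ by the observation that both vanish in negative coweight, and then analyzes $\gamma E_1^-$ via the canonical form $\frac{\gamma}{\rho^j \tau^k}\,x$ with $x$ a $\tau$-free class of $\Ext_\C$, whose degree is $(s-j,\, f,\, w-j-k-1)$. The divergence is in how the $\gamma E_1^-$ term is finished. The paper does not pass to the classical vanishing line at all: it applies the weight-graded $\C$-motivic vanishing theorem \cite[Theorem~1.1]{GIVanish} to $x$ in its full tridegree, obtaining $f \leq \frac{1}{2}(s-k) + \frac{3}{2}$ in one stroke, with no case distinction. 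The weight coordinate of $x$ --- exactly the datum your reduction ``$\tau$-free part of $\Ext_\C \cong \Ext_\cl$'' throws away --- is what makes the $\tau$-exponent $k$ visible in the bound. Your classical-line argument recovers the paper's conclusion only when the stem $s_x = s - j$ of $x$ is positive, which is why you are left staring at the $h_0$-tower.

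That残ual case is a genuine gap, and not merely because you leave it as a program: the route you sketch would fail. The classes $\frac{\gamma}{\rho^j\tau^k}\,h_0^m$ are honestly nonzero in the left-hand term of \Ref{E1NCses} --- since $\F_2[\tau]\{h_0^m\}$ is $\tau$-free and $h_0^m$ is not divisible by $\tau$, the tensor product does not kill them --- and they sit in stem $j$ and filtration $m$ with $m$ unbounded. So no ``dual bookkeeping'' of an $h_0$-action via the \cite{GHIR} description of the negative cone or \autoref{rem:rho-cofree-diff} (which concerns differentials, not the $E_1$-page) can show the relevant chart region is empty of stem-zero contributions: at $E_1$ those classes are simply present. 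What controls them is a coweight count you never perform. The coefficient $\frac{\gamma}{\rho^j\tau^k}$ has coweight $-(k+1) \leq -2$, so a stem-zero factor (which, being $\tau$-free and $\tau$-indivisible, must be a power of $h_0$, of coweight $0$) can only produce classes in coweight $\leq -2$. In coweight $-1$ --- the only coweight in which the proposition is actually invoked, namely to rule out Adams differentials off the coweight-$0$ Bredon--Landweber region in \autoref{NoAdamsDiffs} --- one has $c_x = k \geq 1$, forcing $x$ into positive stem, at which point your classical-line argument does close. So the missing ingredient is either this coweight computation on $x$ or, as in the paper, the weight-sensitive motivic vanishing theorem; your concluding claim that the three reductions ``yield the stated vanishing'' is not earned, and the plan you propose for the crux stalls exactly where you predicted it might.
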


\begin{pf}
The summand $E_1^+$ vanishes when $s - w < 0$, i.e., in negative
coweights.
Similarly, $QE_1^-$ vanishes in negative coweights.

It only remains to consider $\gamma E_1^-$.
Consider a non-zero element of $\gamma E_1^-$ in degree $(s,f,w)$
with $s > 0$.
This element has the form 
$\frac{\gamma}{\rho^j \tau^k} x$, where $x$ is $\tau$-free in $\Ext_\C$.
Moreover, the degree of $x$ is $(s - j, f, w - j - k - 1)$.

Using a vanishing result for $\Ext_\C$ 
\cite[Theorem~1.1]{GIVanish}, 
we know that
\[
f \leq \frac{1}{2} (s-k) + \frac{3}{2}.
\]
Since $k$ is non-negative, it follows that
$f \leq \frac{1}{2} s + \frac{3}{2}$.
\end{pf}

\begin{lem}\label{h1PerNC1} 
In coweight $1$, the
localization $E_1[h_1^{-1}]$ of the Bockstein $E_1$-page
vanishes.
\end{lem}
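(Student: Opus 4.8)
The plan is to exploit the splitting $E_1 = E_1^+ \oplus E_1^-$ together with the short exact sequence \eqref{E1NCses}, and to handle the three pieces $E_1^+$, $\gamma E_1^-$, and $Q E_1^-$ separately. Localization at $h_1$ is exact, so it commutes with the direct sum and carries \eqref{E1NCses} to a short exact sequence; it therefore suffices to prove that each of $E_1^+[h_1^{-1}]$, $\gamma E_1^-[h_1^{-1}]$, and $Q E_1^-[h_1^{-1}]$ vanishes in coweight $1$. Throughout I will use that $h_1$ and $\rho$ both have coweight $0$, so that inverting $h_1$ and multiplying by $\rho$ preserve coweight.

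For the summand $E_1^+ = \Ext_\C[\rho]$, I would invoke the known computation of the $h_1$-localized $\C$-motivic $\Ext$, namely the polynomial algebra $\F_2[h_1^{\pm 1}, v_1^4, v_2, v_3, \dots]$ over $\F_2$. Since $\rho$ preserves coweight, $E_1^+[h_1^{-1}] = \Ext_\C[h_1^{-1}][\rho]$ occupies exactly the coweights of $\Ext_\C[h_1^{-1}]$. The generators $v_1^4$ and $v_n$ have coweights $4$ and $2^n - 1$, while $h_1$ has coweight $0$, so every monomial has coweight a non-negative integer combination of $\{4, 3, 7, 15, \dots\}$. The values $1$ and $2$ are not of this form, so $\Ext_\C[h_1^{-1}]$ — and hence $E_1^+[h_1^{-1}]$ — vanishes in coweights $1$ and $2$.

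For $\gamma E_1^-$, I would show it is annihilated by $h_1^4$, so that its localization is zero in every coweight. An element has the form $\omega \otimes x$, where $\omega$ lies in the $\tau$-divisible module $\frac{\F_2[\tau]}{\tau^\infty}\{\frac{\gamma}{\rho^s}\}$ and $x \in \Ext_\C$. Writing $\omega = \tau \omega'$ and using the relation $\tau h_1^4 = 0$ in $\Ext_\C$ (which holds because $h_1^4$ is $\tau$-torsion), I get $h_1^4 (\omega \otimes x) = \omega' \otimes \tau h_1^4 x = 0$. Thus $\gamma E_1^-[h_1^{-1}] = 0$.

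It remains to treat $Q E_1^-$, which is assembled from the classes $Qx$ attached to the $\tau$-torsion elements $x$ of $\Ext_\C$ (those with $\tau x = 0$), with $h_1 \cdot Qx = Q(h_1 x)$. After inverting $h_1$ the relation $\tau h_1^4 = 0$ forces $\tau = 0$, so $\Ext_\C[h_1^{-1}]$ is entirely $\tau$-killed; since localization commutes with the $\Tor$ that defines $Q$, the coweight-$1$ part of $Q E_1^-[h_1^{-1}]$ is governed by the coweight-$1$ part of $\Ext_\C[h_1^{-1}]$, which is zero by the count above. The main obstacle lies in this last step: one must pin down the coweight bookkeeping for the functor $Q$, checking that it preserves coweight (the excerpt's placement of the coweight-$0$ classes $Q h_1^k$ confirms this) rather than shifting it. The robustness noted above — that $\Ext_\C[h_1^{-1}]$ vanishes in both coweight $1$ and coweight $2$ — guards against an off-by-one error in this bookkeeping.
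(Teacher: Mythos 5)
Your proposal is correct and takes essentially the same route as the paper: both split $E_1$ into $E_1^+$, $\gamma E_1^-$, and $QE_1^-$ and reduce each piece to the $h_1$-localized computation $\Ext_\C[h_1^{-1}]\iso \F_2[v_1^4,v_2,v_3,\dots][h_1^{\pm 1}]$ of \cite[Theorem 1.1]{GI}, which vanishes in coweight $1$ (the paper quotes this vanishing directly, while you rederive it from the coweights of the generators). Your $h_1^4$-annihilation argument for $\gamma E_1^-$, using $\tau$-divisibility of $\F_2[\tau]/\tau^\infty$ together with $\tau h_1^4=0$, is just an explicit rendering of the paper's one-line observation that $\Ext_\C[h_1^{-1}]$ contains no $\tau$-free classes, and your check that $Q$ preserves coweight correctly disposes of the bookkeeping concern in the $QE_1^-$ step.
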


\begin{pf}
We know that $\Ext_\C[h_1^{-1}]$ vanishes in
coweight $1$ \cite[Theorem 1.1]{GI}.
Therefore $E_1^+[h_1^{-1}]$ and $QE_1^-[h_1^{-1}]$ both vanish.
Finally, $\gamma E_1^- [h_1^{-1}]$ also vanishes
because there are no $\tau$-free classes in
$\Ext_\C[h_1^{-1}]$.
\end{pf}

\section{Some Bockstein differentials}

The goal of this section is to compute some explicit Bockstein
differentials that we will need for our analysis of the
Bredon-Landweber region.

\begin{lem}
\label{lem:gamma-diff}
For $k \geq 0$,
\[
d_1\left( \frac{\gamma}{\rho \tau^{2k+1}} \right) = 
\frac{\gamma}{\tau^{2k+2}} h_0.
\]
\[
d_2\left( \frac{\gamma}{\rho^2 \tau^{4k+2}} \right) =
\frac{\gamma}{\tau^{4k+3}} h_1.
\]
\[
d_3\left( \frac{\gamma}{\rho^3 \tau^{4k+4}} \right) = 0.
\]
\end{lem}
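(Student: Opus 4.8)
The plan is to pin down these differentials using three features of the negative cone: its $\bM^\R$-linearity, the $\rho$-(co)freeness recorded in \autoref{prop:rho-cofree-diff}, and the sparseness of the $f=1$ line of $\Ext_\C$. Since each $d_r$ is $\bM^\R$-linear and every class of $E_r^-$ is obtained from a pure $\tau$-power generator $\frac{\gamma}{\rho^j\tau^i}$ by multiplying by a $\tau$-free permanent cycle, it suffices to compute $d_r$ on these generators. Moreover each $d_r$ commutes with multiplication by $\rho$ (as $d_r(\rho)=0$), so I would compute modulo $\rho$ and then propagate along the $\rho$-divisible towers, invoking that $\rho\colon E_r^-[k]\rtarr E_r^-[k+1]$ is injective for $k<0$ and that $E_r^-[k]=0$ for $k>0$, exactly the input used in the proof of \autoref{prop:rho-cofree-diff}.

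Next I would do the degree bookkeeping. Reading off $d_1(\tau)=\rho h_0$ from \autoref{lem:ExtR-mw0}, a Bockstein differential shifts the tridegree $(s,f,w)$ by $(-1,1,0)$; thus $d_r$ lowers the stem by one, raises $f$ by one, and fixes the weight. A $d_r$ on a generator of $\rho$-filtration $-r$ lands in filtration $0$, so its target has the form $\frac{\gamma}{\tau^a}\,y$ with $y$ a $\tau$-free class of $\Ext_\C$ in homological degree $f=1$. Since the source $\frac{\gamma}{\rho^r\tau^m}$ sits in stem $r$, the target sits in stem $r-1$, and the $\tau$-free $f=1$ line of $\Ext_\C$ is concentrated in the stems $2^i-1$, namely $0,1,3,7,\dots$. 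Hence the target of the first differential is forced to be an $h_0$-multiple (stem $0$), that of the second an $h_1$-multiple (stem $1$), while the third differential would need a target in stem $2$, which is empty. This already yields $d_3\bigl(\frac{\gamma}{\rho^3\tau^{4k+4}}\bigr)=0$, the cleanest of the three.

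For the first differential I would transport $d_1(\tau)=\rho h_0$ across the divisible module $\F_2[\tau]/\tau^\infty$ appearing in \Ref{E1NCses}: applying the Leibniz rule to the relation $\tau\cdot\frac{\gamma}{\rho\tau^{2k+2}}=\frac{\gamma}{\rho\tau^{2k+1}}$ produces the $h_0$-term and shows $d_1\bigl(\frac{\gamma}{\rho\tau^{2k+1}}\bigr)=\frac{\gamma}{\tau^{2k+2}}h_0$ is nonzero because $d_1(\tau)=\rho h_0$ is. The vanishing of $d_1$ on the even $\tau$-power generators is then formal: such a differential would land in a filtration one higher, and since multiplying by $\rho$ maps it to a $d_1$ with target in $E_1^-[1]=0$, the injectivity of $\rho$ in negative filtration forces it to vanish. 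This simultaneously computes $E_2^-$: the even $\tau$-power generators survive, and the ones with $\tau$-exponent $\equiv 2 \bmod 4$ are exactly those that can carry the secondary differential.

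The main obstacle is the nonvanishing of $d_2$, which is genuinely secondary and invisible from $d_1(\tau)=\rho h_0$ alone; note that the target $\frac{\gamma}{\tau^{4k+3}}h_1$ exists in the negative cone precisely because of the $\gamma$-shift, even though the analogous positive-cone class is absent. To force $d_2\neq 0$ I would run an $h_1$-localization argument in the spirit of \autoref{h1PerNC1}: the classes $\frac{\gamma}{\tau^{4k+3}}h_1$ generate $h_1$-towers that cannot survive after inverting $h_1$, so some differential must truncate them, and the degree and filtration constraints above leave $d_2$ from $\frac{\gamma}{\rho^2\tau^{4k+2}}$ as the only candidate, with \autoref{prop:rho-cofree-diff} guaranteeing that both source and target are infinitely $\rho$-divisible as required. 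Making this localization argument precise across the relevant coweights, and verifying the $\bmod\,4$ parity pattern that distinguishes the surviving $\tau$-exponents $\equiv 0 \bmod 4$ (the $d_3$-cycles) from those $\equiv 2 \bmod 4$ supporting $d_2$, is where the real work lies; the remaining identifications are then routine.
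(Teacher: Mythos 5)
Your reduction to the $\rho$-power generators and your degree bookkeeping are fine, and your sparseness argument for the third formula is essentially sound (though to make ``the only candidates'' precise you should also rule out targets in the Tor summand of the sequence \Ref{E1NCses}, not just in $\gamma E_1^-$). But there are two genuine gaps. First, in the $d_1$ computation, the claimed ``formal'' vanishing on the even-exponent generators is wrong: the would-be target $\frac{\gamma}{\tau^{2k+3}}h_0$ lies in Bockstein filtration $0$, and $\rho$ annihilates all of filtration $0$ (since $E_1^-[1]=0$), so the equation $\rho\, d_1(x)=0$ carries no information; the injectivity you import from the proof of \autoref{prop:rho-cofree-diff} holds only in negative filtrations. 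Without that vanishing, your Leibniz relation $\tau\cdot\frac{\gamma}{\rho\tau^{i+1}}=\frac{\gamma}{\rho\tau^{i}}$ only yields the recursion $\epsilon_i=1+\epsilon_{i+1}$ for the coefficients $\epsilon_i\in\F_2$ defined by $d_1\bigl(\frac{\gamma}{\rho\tau^{i}}\bigr)=\epsilon_i\frac{\gamma}{\tau^{i+1}}h_0$: it determines the alternation but not which parity is nonzero. You need an anchor, and the paper's proof supplies it: applying $d_1$ to the torsion relation $\tau^{2k+1}\cdot\frac{\gamma}{\rho\tau^{2k+1}}=0$, with $d_1(\tau^{2k+1})=\rho\tau^{2k}h_0$, forces $\tau^{2k+1}\, d_1\bigl(\frac{\gamma}{\rho\tau^{2k+1}}\bigr)=\frac{\gamma}{\tau}h_0$ and pins down the answer with no auxiliary vanishing claim.

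Second, and more seriously, your mechanism for forcing $d_2\neq 0$ fails. The classes $\frac{\gamma}{\tau^{4k+3}}h_1$ do not generate infinite $h_1$-towers in $\gamma E_1^-$: since $\tau h_1^4=0$ in $\Ext_\C$, one has $\frac{\gamma}{\tau^{a}}h_1^4=\frac{\gamma}{\tau^{a+1}}\cdot\tau h_1^4=0$, so these classes already die after inverting $h_1$ on the $E_1$-page and no differential is needed to truncate anything. (The infinite $h_1$-towers in the negative cone are the $Q$-classes coming from the Tor term, not the $\gamma$-classes.) Moreover \autoref{h1PerNC1} concerns coweight $1$, whereas your sources and targets sit in coweights around $-4k-3$. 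The missing idea is that this $d_2$ is not ``invisible from the positive cone'': \cite{DI} (Proposition 3.2) records $d_2(\tau^{4k+2})=\rho^2\tau^{4k+1}h_1$ and $d_3(\tau^{4k+4})=0$, and the paper's proof obtains all three formulas uniformly by applying the Leibniz rule to the torsion relations $\tau^{2k+1}\cdot\frac{\gamma}{\rho\tau^{2k+1}}=0$, $\tau^{4k+2}\cdot\frac{\gamma}{\rho^2\tau^{4k+2}}=0$, and $\tau^{4k+4}\cdot\frac{\gamma}{\rho^3\tau^{4k+4}}=0$. As written, your proposal leaves the crucial nonvanishing of $d_2$ unproven, and you acknowledge as much; the fix is to use the known positive-cone differentials rather than a localization argument.
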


\begin{proof}
These formulas follow from the Leibniz rule and the
$\R$-motivic Bockstein differentials
$d_1(\tau^{2k+1}) = \rho \tau^{2k} h_0$, 
$d_2(\tau^{4k+2}) = \rho^2 \tau^{4k+1} h_1$,
and $d_3(\tau^{4k+4}) = 0$
\cite[Proposition 3.2]{DI}.

More specifically, start with the relation
$\tau^{2k+1} \cdot \frac{\gamma}{\rho \tau^{2k+1}} = 0$.
Apply the $d_1$ differential to obtain
\[
0 = 
\rho \tau^{2k} h_0 \cdot \frac{\gamma}{\rho \tau^{2k+1}} + 
\tau^{2k+1} \cdot d_1 \left( \frac{\gamma}{\rho \tau^{2k+1}} \right) =
\frac{\gamma}{\tau} h_0 + 
\tau^{2k+1} \cdot d_1 \left( \frac{\gamma}{\rho \tau^{2k+1}} \right).
\]
Therefore,
$d_1 \left( \frac{\gamma}{\rho \tau^{2k+1}} \right)$ must equal
$\frac{\gamma}{\tau^{2k+2}} h_0$.

The second and third formulas follow from a similar argument, starting with
the relations $\tau^{4k+2} \cdot \frac{\gamma}{\rho^2 \tau^{4k+2}} = 0$
and $\tau^{4k+4} \cdot \frac{\gamma}{\rho^2 \tau^{4k+4}} = 0$.
\end{proof}

\begin{lem}
\label{lem:v1-perm}
For $k \geq 0$, the elements
$\tau P^k h_1$, $P^k h_2$, and $\tau P^k c_0$, are
permanent cycles in the $\R$-motivic $\rho$-Bockstein
spectral sequence.
\end{lem}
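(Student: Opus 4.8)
The plan is to work throughout in the positive cone $E_1^+ = \Ext_\C[\rho]$, which carries a multiplicative Bockstein differential whose values on powers of $\tau$ are recorded in \cite[Proposition 3.2]{DI}. Each element in the statement has the form $\tau^\varepsilon g$ with $\varepsilon \in \{0,1\}$ and $g$ one of the $\tau$-free generators $P^k h_1$, $P^k h_2$, $P^k c_0$. By the Leibniz rule $d_r(\tau^\varepsilon g) = d_r(\tau^\varepsilon)\, g + \tau^\varepsilon d_r(g)$, so I would split the problem into two parts: (i) showing that each generator $g$ supports no Bockstein differential, and (ii) showing that the Leibniz correction $d_r(\tau^\varepsilon)\, g$ coming from the attached power of $\tau$ vanishes.

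For (i), the essential observation is that every $d_r$ has tridegree $(-1,1,0)$ in the $(s,f,w)$-grading: it drops the stem by one, raises the Adams filtration by one, preserves the weight, and hence drops the coweight by one, the difference between the pages being recorded only in the power of $\rho$ carried by the target. Thus $d_r(g) = \rho^r y_r$ for a class $y_r \in \Ext_\C$ of filtration $f(g)+1$, fixed coweight $c(g)-1$, and stem $s(g)+r-1$. Since each generator sits on the Adams edge (the line of slope $1/2$), I would rule out all such targets using the vanishing line of \cite[Theorem 1.1]{GIVanish} for small $r$, and the sparseness of $\Ext_\C$ at a fixed coweight and filtration (where only finitely many stems are occupied) for large $r$; this forces $y_r = 0$. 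Equivalently, each $g$ is the $\R$-motivic reduction of an image-of-$J$ class, hence a permanent cycle.

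For (ii), the point is the distinction between the three families. For $\tau P^k h_1$ and $\tau P^k c_0$ one computes $d_1(\tau P^k h_1) = \rho h_0 \cdot P^k h_1 = \rho P^k(h_0 h_1) = 0$ and $d_1(\tau P^k c_0) = \rho P^k(h_0 c_0) = 0$, using the relations $h_0 h_1 = 0$ and $h_0 c_0 = 0$; this is precisely why $h_1$ and $c_0$ must carry a factor of $\tau$, whereas $h_2$, for which $h_0 h_2 \neq 0$, does not, and $P^k h_2$ is instead disposed of directly as a $\tau$-free generator in (i). The higher differentials on $\tau P^k h_1$ and $\tau P^k c_0$ have as their only possible targets the classes $\rho^2 P^k h_1^2$ and their analogues; I would exclude these by noting that such a target is a nonzero class surviving to $\Ext_\R$ — for instance $\rho^2 h_1^2$ is nonzero in $\Ext_\R$ by \autoref{lem:ExtR-mw0} — and a surviving class cannot also be a boundary.

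The main obstacle is the uniformity in $k$: the targets of the potential higher differentials live in coweight $4k-1$ and beyond, outside the range in which $\Ext_\R$ is explicitly computed, so the ``target survives'' argument and the finite edge-check of (i) must be made to work for all $k$ at once. The cleanest way to handle this, which I would pursue, is to propagate the base case $k=0$ (where everything lies in low coweight and can be read off from \autoref{lem:ExtR-mw0} and \cite{DI}) through the Adams periodicity operator $P = \langle h_3, h_0^4, -\rangle$: since $h_0$ and $h_3$ are permanent cycles and $\tau^4$ is a permanent cycle (there is no $d_3$ on powers of $\tau$, by \cite[Proposition 3.2]{DI}), a Leibniz rule for Massey products in the Bockstein spectral sequence should show that $P$ carries permanent cycles to permanent cycles, reducing every $k$ to the case $k=0$.
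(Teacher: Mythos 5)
You have correctly identified the crux (uniformity in $k$), but the tools you bring to bear before your final paragraph do not close the argument, and the final paragraph itself contains two genuine errors. For part (i), degree-counting fails: a potential $d_r(g) = \rho^r y_r$ has target of filtration $f(g)+1$, coweight $c(g)-1$, and stem $s(g)+r-1$, so the stem grows with $r$ while the filtration rises only by $1$. Consequently any vanishing line of slope $\tfrac{1}{2}$ excludes \emph{nothing}: for $g = P^k h_2$ in degree $(8k+3, 4k+1, 4k+2)$, the condition $f \le \tfrac{1}{2}s + \tfrac{3}{2}$ on the target reads $4k+2 \le 4k+1+\tfrac{r}{2}+\tfrac{3}{2}$, which holds for every $r \ge 1$. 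Your sparseness observation is correct in substance (nonzero classes satisfy $s+f-2w \ge 0$, which at fixed coweight and filtration bounds the stem), but it only caps $r$ at roughly $4k$, leaving a window of about $4k$ pages of a priori possible targets that grows with $k$; the same applies in (ii), where $\rho^2 P^k h_1^2$ is \emph{not} the only candidate target for higher differentials on $\tau P^k h_1$. (The ``image of $J$'' aside is also not a proof: homotopy-level realization bears on Adams differentials, not on $\rho$-Bockstein differentials, whose vanishing is a purely algebraic statement about convergence to $\Ext_\R$.) So the entire weight falls on your closing idea --- propagating the base case through the periodicity operator via a Leibniz rule for Massey products --- and that is in fact the paper's actual proof: it expresses all three families as recursive (matric) Massey products and applies the May Convergence Theorem \cite{May} to the $\rho$-Bockstein spectral sequence.

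Your implementation of that step, however, breaks at two points. First, the plain bracket $\langle h_3, h_0^4, - \rangle$ is not available $\R$-motivically; following \cite{Quig}, the paper uses the matric bracket
\[
\left\langle [\, h_3 \ \ c_0 \,], \begin{bmatrix} h_0^4 \\ \rho^3 h_1^2 \end{bmatrix}, - \right\rangle,
\]
whose definedness encodes the $\Ext_\R$-relation $h_3 h_0^4 = \rho^3 h_1^2 c_0$; without the matric correction the hypotheses needed to form and converge the bracket fail (this is the same $\rho$-Bockstein phenomenon responsible for the differentials in \autoref{lem:d3}). Second, your claim that $\tau^4$ is a permanent cycle is false: $d_3(\tau^4)=0$ from \cite[Proposition 3.2]{DI} says only that $d_3$ vanishes, whereas $\tau^4$ has $s+f-2w = 8 \neq 0$ and so cannot survive $\rho$-locally by \cite[Theorem 4.1]{DI}; since it has filtration $0$ it can never be a boundary, so it must support some higher Bockstein differential. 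Hence your proposed propagation ``$h_0$, $h_3$, $\tau^4$ are permanent cycles, so $P$ preserves permanent cycles'' collapses. The paper sidesteps exactly this by never detaching $\tau$ from the class: the third entry of the bracket is $\tau P^{k-1} h_1$ itself, a permanent cycle by induction, so no power of $\tau$ alone is ever required to survive. With the matric bracket substituted for the plain one and the $\tau$'s carried inside the induction, your plan becomes the paper's argument; as written, it has genuine gaps at these two points, and the degree-theoretic portions (i)--(ii) should simply be discarded rather than repaired.
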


\begin{proof}
We can express the classes $\tau P^k h_1$ recursively
as matric Massey products \cite{Quig}:
\[
\tau P^k h_1 = \left\langle [ h_3 \  c_0 ], \begin{bmatrix} h_0^4 \\ \rho^3 h_1^2\end{bmatrix}, \tau P^{k-1} h_1\right\rangle.
\]
The May Convergence Theorem \cite[Theorem~4.1]{May} 
\cite[Theorem~2.2.1]{Istems},
applied to the $\rho$-Bockstein spectral sequence, 
shows that $\tau P^k h_1$ is a permanent cycle.

Similarly, we have recursive matric Massey products
\[
P^k h_2 = \left\langle [ h_3 \  c_0 ], \begin{bmatrix} h_0^4 \\ \rho^3 h_1^2\end{bmatrix}, P^{k-1} h_2 \right\rangle
\]\[
\tau P^k c_0 = \left\langle [ h_3 \  c_0 ], \begin{bmatrix} h_0^4 \\ \rho^3 h_1^2\end{bmatrix}, \tau P^{k-1} c_0\right\rangle.
\]
\end{proof}

\begin{lem}
\label{lem:d3}
For $k \geq 0$,
\[
d_3(\tau^3 P^k h_0^3 h_3) = \rho^3 \tau P^{k+1} h_1.
\]
\[
d_3(\tau^3 P^k h_1 c_0) = \rho^3 P^{k+1} h_2.
\]
\end{lem}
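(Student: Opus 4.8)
The plan is to prove both $d_3$ formulas by the mechanism already used in \autoref{lem:v1-perm}: combine the Adams periodicity operator, expressed as a (matric) Massey product, with the low $\rho$-Bockstein differentials $d_1(\tau) = \rho h_0$ and $d_2(\tau^2) = \rho^2\tau h_1$, and then invoke the May Convergence Theorem \cite[Theorem~4.1]{May} \cite[Theorem~2.2.1]{Istems} applied to the $\rho$-Bockstein spectral sequence. I would treat the two formulas in parallel, since they differ only in which periodicity relation is fed into the machine. Throughout, the relations $h_0 h_1 = 0$ and $h_0^4 h_3 = 0$ in $\Ext_\C$, together with the $\R$-motivic relations underlying \autoref{lem:v1-perm}, are what make everything go.

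First I would check that $\tau^3 P^k h_0^3 h_3$ and $\tau^3 P^k h_1 c_0$ actually survive to $E_3$, so that a $d_3$ is defined on them. For the first class, the Leibniz rule and $d_1(\tau^3) = \rho\tau^2 h_0$ give $d_1(\tau^3 P^k h_0^3 h_3) = \rho\tau^2 P^k h_0^4 h_3$, which vanishes since $h_0^4 h_3 = 0$; at $E_2$ the only term the Leibniz rule can contribute is a multiple of $\rho^2\tau P^k h_0^3 h_1 h_3$, which vanishes because $h_0 h_1 = 0$. The second class is handled the same way, using $h_0 h_1 = 0$ at $E_1$ and the $\R$-motivic relation making $h_3 h_0^4 + c_0\rho^3 h_1^2 = 0$ at $E_2$, i.e. the very relation on which the matric Massey products of \autoref{lem:v1-perm} rest.

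Next I would import the periodicity relation in the Massey product form $P(-) = \langle -,\, h_0^4,\, h_3\rangle$, so that $P^{k+1} h_1$ and $P^{k+1} h_2$ arise from $P^k h_1$ and $P^k h_2$. Because $d_1(\tau) = \rho h_0$ exhibits $h_0$ as a boundary shifted up one $\rho$-filtration, the May Convergence Theorem converts this bracket into a Bockstein differential on $\tau^3 P^k h_0^3 h_3$ (respectively $\tau^3 P^k h_1 c_0$). The three factors of $\tau$, the $h_0^3$ already present in the source, and the relation $h_0^4 h_3 = 0$ together account for the three units of $\rho$ and the single leftover $\tau$, producing exactly $\rho^3\tau P^{k+1} h_1$ (respectively $\rho^3 P^{k+1} h_2$) as the $d_3$-image.

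The main obstacle will be this last step: making the May Convergence bookkeeping precise, so that the output lands in $\rho$-Bockstein filtration exactly $3$ rather than $4$, and controlling the Massey-product indeterminacy so that the target is pinned down unambiguously. A secondary point, needed for the differential to be nonzero, is to confirm that $\rho^3\tau P^{k+1} h_1$ and $\rho^3 P^{k+1} h_2$ are nonzero permanent cycles on $E_3$; this follows from \autoref{lem:v1-perm}, which guarantees that $\tau P^{k+1} h_1$ and $P^{k+1} h_2$ survive, together with a check that these classes are not already hit by $d_1$ or $d_2$.
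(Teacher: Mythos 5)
There is a genuine gap at the heart of your plan: the May Convergence Theorem does not do what you ask of it. That theorem (\cite[Theorem~4.1]{May}, \cite[Theorem~2.2.1]{Istems}) takes a Massey product formed from \emph{permanent cycles} on some page and shows that it converges to the corresponding bracket in the abutment; this is exactly how it is used in \autoref{lem:v1-perm} to show that $\tau P^k h_1$, $P^k h_2$, and $\tau P^k c_0$ survive. It does not ``convert a bracket into a Bockstein differential,'' and no amount of bookkeeping with $d_1(\tau)=\rho h_0$ and $d_2(\tau^2)=\rho^2\tau h_1$ plus the bracket $P(-)=\langle -, h_0^4, h_3\rangle$ is supplied in your sketch to produce the value of $d_3$. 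A direct derivation along your lines would require a Leibniz-type theorem for (matric) Massey products in spectral sequences, together with control of indeterminacy and crossing differentials, and you yourself flag precisely this step as ``the main obstacle'' without resolving it --- so the proposal asserts the key computation rather than proving it. (Your preliminary checks that the sources survive to $E_3$ are also imprecise --- $\tau^3$ is not itself a cycle on $E_1$, so you cannot literally apply the Leibniz rule to the factorization $\tau^3\cdot P^k h_0^3 h_3$ on later pages, and the relation $h_0^4h_3 + \rho^3 h_1^2 c_0 = 0$ lives in the abutment $\Ext_\R$, not on the $E_2$-page --- but these points are repairable and, in the paper's argument, unnecessary.)

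The paper avoids the direct computation entirely by an indirect forcing argument. By \cite[Theorem~4.1]{DI}, the only classes surviving the $\rho$-inverted Bockstein spectral sequence satisfy $s+f-2w=0$. The class $\tau P^{k+1} h_1$ fails this equation, so after inverting $\rho$ it must die; since \autoref{lem:v1-perm} shows it cannot support a differential, some $\rho^r \tau P^{k+1} h_1$ must be \emph{hit}, and inspection of the $E_1$-page shows the only possible source is $\tau^3 P^k h_0^3 h_3$ via $d_3$ --- which simultaneously pins down the length $r=3$ of the differential, the survival of the source to $E_3$, and the nonvanishing of the target, all the points your sketch leaves open. The same argument gives the second formula with $P^{k+1}h_2$ in place of $\tau P^{k+1}h_1$. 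If you want to salvage your approach, you should either replace the appeal to the May Convergence Theorem with this vanishing-line argument, or locate and verify the hypotheses of an actual differential-from-Massey-product theorem; as written, the central step of your proof has no support.
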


\begin{proof}
By \cite[Theorem~4.1]{DI}, the only classes in $\Ext_\R[\rho^{-1}]$ that survive the $\rho$-inverted Bockstein spectral sequence are those satifsying $s+f-2w=0$.
Since $\tau P^k h_1$  does not satisfy this equation, 
either it supports a Bockstein differential, or
$\rho^r \tau P^k h_1$  is hit by a Bockstein differential for some $r$.
But \autoref{lem:v1-perm} shows that $\tau P^k h_1$ does not
support a differential.
Therefore, $\rho^r \tau P^k h_1$ must be hit by some differential.
By inspection, there is only one possibility.
This establishes the first formula.

The same argument applies to establish the second formula.
\end{proof}

\begin{lem}\label{d4km1Qh14}
For $k\geq 1$,
\[d_{4k-1}\left( \frac{Q}{\rho^{4k-1}} h_1^{4k}\right) = \frac{\gamma}{ \tau^{4k-1}} P^{k-1}h_0^3 h_3.\]
\[d_{4k}\left( \frac{Q}{\rho^{4k}} h_1^{4k+1}\right) = \frac{\gamma}{ \tau^{4k}} P^{k}h_1.\]
\end{lem}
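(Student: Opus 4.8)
The plan is to proceed by analogy with the proof of \autoref{lem:d3}, but now inside the negative-cone summand $E^-$ of the Bockstein spectral sequence, using the duality principle of \autoref{rem:rho-cofree-diff} to transport the positive-cone differentials of \autoref{lem:d3} into $E^-$. First I would record the relevant degrees. Both sources $\frac{Q}{\rho^{4k-1}} h_1^{4k}$ and $\frac{Q}{\rho^{4k}} h_1^{4k+1}$ lie in coweight $0$, since the classes $Qh_1^j$ generate $QE_1^-$ in coweight $0$ and $\rho$ has coweight $0$, while the proposed targets $\frac{\gamma}{\tau^{4k-1}} P^{k-1} h_0^3 h_3$ and $\frac{\gamma}{\tau^{4k}} P^k h_1$ lie in coweight $-1$. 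Since every $\rho$-Bockstein differential lowers the coweight by exactly $1$ (as one checks on the formulas of \autoref{lem:gamma-diff} and \autoref{lem:d3}), these source/target pairs are at least compatible with a differential, and matching stems and filtrations then forces the lengths to be $4k-1$ and $4k$, respectively.

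Next I would use \autoref{prop:rho-cofree-diff} to organize the possible differentials. Because $E_1^-$ is concentrated in non-positive $\rho$-filtration, that proposition shows that the filtration-$0$ class $Qh_1^{4k}$ is a permanent cycle, and that a $d_r$ out of $\frac{Q}{\rho^j} h_1^{4k}$ can be nonzero only when $r \le j$; thus $\frac{Q}{\rho^{4k-1}} h_1^{4k}$ can support at most a $d_{4k-1}$, and I must show that it does. The cleanest route is to argue that the target cannot survive to $E_\infty^-$: the target lies in coweight $-1$, and I would identify the coweight-$(-1)$ part of $\Ext_{NC}$ in the relevant stem and filtration, via the duality between $E^-$ and $E^+$ together with the known coweight-$0$ computation $\Ext_\R = \F_2[h_0,h_1,\rho]/(h_0 h_1, \rho h_0)$ of \autoref{lem:ExtR-mw0} (and the low-coweight $\Ext_\R$ of \cite{DI}), and check that it vanishes there. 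Granting this, the target must be hit; by \autoref{prop:rho-cofree-diff} the source is forced to be infinitely $\rho$-divisible of the correct filtration, and an inspection of $E_1^-$ in the intervening degrees, ruling out spurious classes using \autoref{NegConeVanish} and \autoref{h1PerNC1} (the latter controlling the coweight-$1$ classes that could otherwise hit the source first), leaves $\frac{Q}{\rho^{4k-1}} h_1^{4k}$ as the unique possible source. The same template handles the second formula, with $h_1^{4k+1}$ and $P^k h_1$ in place of $h_1^{4k}$ and $P^{k-1} h_0^3 h_3$.

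The conceptual engine behind both differentials is \autoref{lem:d3}: the coefficient $P^{k-1} h_0^3 h_3$ is not a permanent cycle in the positive-cone Bockstein spectral sequence, since $d_3(\tau^3 P^{k-1} h_0^3 h_3) = \rho^3 \tau P^k h_1$, and under the $\gamma$/$Q$ structure of the short exact sequence \Ref{E1NCses} a nonzero positive-cone differential on a $\tau$-free coefficient is converted into a differential hitting the corresponding $\gamma$-class in the negative cone, with the $\tau$-torsion class $h_1^{4k}$ (respectively $h_1^{4k+1}$) supplying the $Q$-class source. I expect an induction on $k$ to be the most efficient organization, with the base case $k=1$ checked directly and the inductive step driven by the recursive matric Massey products of \autoref{lem:v1-perm}. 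The main obstacle is precisely this transport step: making the duality of \autoref{rem:rho-cofree-diff} quantitatively explicit enough both to prove that the target does not survive and to pin the lengths at $4k-1$ and $4k$, since these long negative-cone differentials arise from the interaction of a short positive-cone differential with the deep $\tau$-divisibility of the $\gamma$-classes rather than from any single positive-cone differential of the same length.
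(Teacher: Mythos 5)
There is a genuine gap at the crux of your argument, and it is exactly where you flag "the main obstacle": the transport step is never actually performed, and the paper's tools cannot perform it. Your plan hinges on showing that the target $\frac{\gamma}{\tau^{4k-1}}P^{k-1}h_0^3h_3$ cannot survive to $E_\infty^-$, but nothing in the paper supplies this. The vanishing region of \autoref{NegConeVanish} misses it by one filtration: the target sits in degree $(s,f,w) = (8k-1,\, 4k,\, 8k)$, and $f = 4k$ does not exceed $\frac{1}{2}s + \frac{3}{2} = 4k+1$. The paper computes $\Ext_{NC}$ only in coweight $0$ (\autoref{prop:ExtC2-BL}), not in coweight $-1$, and establishing the coweight $-1$ vanishing you need is essentially equivalent to proving the differential itself, so your argument is circular absent an external input. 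Moreover, you lean on \autoref{rem:rho-cofree-diff} as a "duality principle" that would convert the positive-cone differentials of \autoref{lem:d3} into negative-cone differentials, but that remark is purely structural: it says the proof of \autoref{prop:rho-cofree-diff} dualizes the proof of a lemma in \cite{DI}, constraining which classes \emph{can} be involved in differentials; it provides no mechanism for transporting specific differential values. The Leibniz-rule transport of positive-cone differentials onto $\gamma$-classes does work, but it only produces differentials internal to $\gamma E_1^-$ (this is precisely how \autoref{tbl:BockDiff-MW1} is obtained); it can never produce a differential whose source lies in $QE_1^-$, since such a differential crosses the extension \Ref{E1NCses} and is not a Leibniz consequence of anything in $E_1^+$. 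Your suggested induction via the Massey products of \autoref{lem:v1-perm} has the same problem: those concern permanent cycles in $E^+$ and give no handle on $Q$-class sources.

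The paper's actual proof imports the needed input by a different comparison entirely: restriction along $\cA^{C_2} \rightarrow \cA^{C_2}(1)$. Over $\cA^{C_2}(1)$, the differential $d_{4k}\left(\frac{Q}{\rho^{4k}}h_1^{4k}\right) = \frac{\gamma}{\tau^{4k}}b^k$ is known from \cite[Proposition~7.9]{GHIR}. The second formula of the lemma follows immediately by naturality, since $P^kh_1$ restricts to $h_1 b^k$. For the first formula, the key observation is that $\frac{\gamma}{\tau^{4k}}b^k$ is \emph{not} in the image of restriction, so $\frac{Q}{\rho^{4k}}h_1^{4k}$ cannot survive to the $E_{4k}$-page over $\cA^{C_2}$ and must support a shorter differential, for which $d_{4k-1}\left(\frac{Q}{\rho^{4k-1}}h_1^{4k}\right) = \frac{\gamma}{\tau^{4k-1}}P^{k-1}h_0^3h_3$ is the only possibility by inspection. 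Several of your supporting observations are correct and consistent with this (the filtration bound showing $\frac{Q}{\rho^j}h_1^{4k}$ supports at most a $d_j$, the coweight bookkeeping, and the determination of the differential lengths once source and target are fixed), but without the comparison to $\cA^{C_2}(1)$ or some equivalent external computation, your proposal does not close.
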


\begin{pf}
The class $ \frac{Q}{\rho^{4k}} h_1^{4k}$ restricts to a class of the same name in $\Ext$ over $\cA(1)^{C_2}$. There, 
we have 
\[
d_{4k}\left( \frac{Q}{\rho^{4k}} h_1^{4k}\right) = 
\frac{\gamma}{\tau^{4k}} b^k
\]
by \cite[Proposition~7.9]{GHIR}. 
The second formula follows immediately from this, since
$P^k h_1$ restricts to $h_1 b^k$.

On the other hand, 
the Bockstein class $\frac{\gamma}{\tau^{4k}} b^k$ is not in the image of the restriction from the $\rho$-Bockstein spectral sequence over $\cA^{C_2}$,
so $\frac{Q}{\rho^{4k}} h_1^{4k}$ must support a shorter Bockstein differential over $\cA^{C_2}$. The claimed differential is the only possibility.
\end{pf}

\autoref{tbl:BockDiff} summarizes the key differential calculations
that we will need later.

\begin{table}[ht]
\captionof{table}{Some Bockstein differentials}
\label{tbl:BockDiff}
\begin{center}
\begin{tabular}{LLLLLl} 
\hline
c &  (s,f, w) & \text{element} & r & d_r & proof \\ \hline  
-2k-2 & (1, 0, 2k+3) & \frac{\gamma}{\rho \tau^{2k+1}} & 1 &
	\frac{\gamma}{\tau^{2k+2}} h_0 & \autoref{lem:gamma-diff} \\
-4k-3 & (2, 0, 4k+5) & \frac{\gamma}{\rho^2 \tau^{4k+2}} & 2 &
	\frac{\gamma}{\tau^{4k+3}} h_1 & \autoref{lem:gamma-diff} \\
4k+6 & (8k+7, 4k+4, 4k+1) & \tau^3 P^k h_0^3 h_3 & 3 
	& \rho^3 \tau P^{k+1} h_1 & \autoref{lem:d3} \\
4k+6 & (8k+9, 4k+4, 4k+3) & \tau^3 P^k h_1 c_0 & 3 
	& \rho^3 P^{k+1} h_2 & \autoref{lem:d3} \\
0 & (8k, 4k-1, 8k) & \frac{Q}{\rho^{4k-1}} h_1^{4k} & 4k-1 
  & \frac{\gamma}{\tau^{4k-1}}P^{k-1}h_0^3 h_3 & \autoref{d4km1Qh14} \\
0 & (8k+2, 4k, 8k+2) & \frac{Q}{\rho^{4k+1}} h_1^{4k+1} & 4k
  & \frac{\gamma}{\tau^{4k}}P^kh_1 & \autoref{d4km1Qh14} \\
\hline
\end{tabular}
\end{center}
\end{table}

\section{$\Ext_{C_2}$ in coweight $0$}

\autoref{prop:ExtC2-BL} explicitly describes a large part
of $\Ext_{C_2}$ in coweight $0$.  This result is more
easily understood in the $\Ext$ chart in \autoref{ExtChart},
where we are considering only elements above the shaded region.

\begin{prop}
\label{prop:ExtC2-BL}
In degrees $(s,f,w)$ satisfying $s- w = 0$ and 
$f > \frac{1}{2} s - 1$,
$\Ext_{C_2}$ consists of the following classes:
\begin{enumerate}
\item
$h_0^k$ for $k \geq 0$.
\item
$\rho^j h_1^k$ for $j \geq 0$ and $k \geq 0$.
\item
$\frac{Q}{\rho^j} h_1^{4k + \varepsilon}$,
with $k \geq 1$, $0 \leq \varepsilon \leq 3$ and:
\begin{enumerate}
\item
$j \leq 4k - 2$ when $\varepsilon = 0$.
\item
$j \leq 4k - 1$ when $1 \leq \varepsilon \leq 3$.
\end{enumerate}
\end{enumerate}
\end{prop}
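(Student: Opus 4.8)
The plan is to read off $\Ext_{C_2}$ in coweight $0$ from the $\rho$-Bockstein spectral sequence, using the splitting $E_1 = E_1^+ \oplus E_1^-$ and the differentials assembled in Table 5.1. Since $\Ext_{C_2} \cong \Ext_\R \oplus \Ext_{NC}$, I would handle the two summands separately and then combine.

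First, the contribution from $E_1^+$, which converges to $\Ext_\R$. By Lemma 3.1, $\Ext_\R$ in coweight $0$ is exactly $\F_2[h_0,h_1,\rho]/(h_0 h_1, \rho h_0)$. Reading this off, the classes in the stated range $f > \tfrac12 s - 1$ are precisely the $h_0$-tower $h_0^k$ (in stem $0$) and the $\rho$-multiples of $h_1$-powers, $\rho^j h_1^k$. This gives items (1) and (2). The small verification here is just that these families lie above the line $f = \tfrac12 s - 1$ and that nothing else in $\Ext_\R$ does.

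The main work is the $E_1^-$ summand, converging to $\Ext_{NC}$. I would use the description of $E_1^-$ from Section 3: the infinitely $\rho$-divisible classes $Q h_1^{4k+\varepsilon}$ (from $\tau$-torsion in $\Ext_\C$) together with the $\gamma$-classes. The relevant differentials are those of Lemma 4.5 (equivalently, the last two rows of Table 5.1): for each $k \geq 1$, the class $\tfrac{Q}{\rho^{4k-1}} h_1^{4k}$ supports a $d_{4k-1}$ hitting $\tfrac{\gamma}{\tau^{4k-1}} P^{k-1} h_0^3 h_3$, and $\tfrac{Q}{\rho^{4k+1}} h_1^{4k+1}$ supports a $d_{4k}$ hitting $\tfrac{\gamma}{\tau^{4k}} P^k h_1$. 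By Proposition 3.2, any nonzero $E_r^-$ differential has source and target that are \emph{infinitely} $\rho$-divisible; combined with the multiplicative structure in $\rho$, a single differential off $\tfrac{Q}{\rho^{4k-1}}h_1^{4k}$ truncates the infinite $\rho$-divisible tower on $Q h_1^{4k}$ so that the survivors are exactly $\tfrac{Q}{\rho^j} h_1^{4k}$ with $j \leq 4k-2$. This is case (3a). Similarly the $d_{4k}$ off $\tfrac{Q}{\rho^{4k+1}} h_1^{4k+1}$ truncates the $Q h_1^{4k+1}$ tower to $j \leq 4k-1$; and for $\varepsilon = 2, 3$ the towers $Q h_1^{4k+2}, Q h_1^{4k+3}$ receive no differential in the stated range and also support none (there is no shorter class available as a source, and Proposition 4.3 rules out differentials by the vanishing line), so they survive with the bound $j \leq 4k-1$ inherited from the weight/coweight constraint that puts them in coweight $0$. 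This yields case (3b).

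The hard part will be bookkeeping the exact surviving $\rho$-divisibility bounds: I must check that after the one differential off each $Q$-tower, the quotient is precisely a finite $\rho$-divisible segment of the stated length, and that no \emph{further} differentials (either longer ones off the $Q$-classes or differentials originating from $\gamma$-classes) alter these families inside the region $f > \tfrac12 s - 1$. Here I would invoke Proposition 4.3 (the negative-cone vanishing line $f > \tfrac12 s + \tfrac32$ in negative coweight) to control the $\gamma$-targets, and note that all differentials of Lemma 4.1 on the $\tfrac{\gamma}{\rho^j \tau^\ell}$ classes land in negative coweight and hence do not interfere with coweight-$0$ survivors. A final consistency check is that the degrees match: the source $\tfrac{Q}{\rho^{4k-1}} h_1^{4k}$ sits in $(8k, 4k-1, 8k)$ and its target in coweight $0$, confirming that the surviving $Q$-classes are genuinely coweight-$0$ classes of $\Ext_{NC}$ landing in the claimed stems.
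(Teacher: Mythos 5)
There is a genuine gap here, and it is precisely where the paper's proof does its main work. Your accounting of the negative cone in coweight $0$ above the line $f = \frac{1}{2}s - 1$ is incomplete: besides the $Q$-towers, the summand $\gamma E_1^-$ contains infinitely $\rho$-divisible coweight-$0$ families $\frac{\gamma}{\rho^j \tau^{c-1}} x$ for every $\tau$-free class $x$ of coweight $c \geq 2$ along the Adams edge of $\Ext_\C$ --- for example $\frac{\gamma}{\rho^j \tau^{4k-1}} P^k h_1$, $\frac{\gamma}{\rho^j \tau^{4k-1}} P^k h_1^2$, $\frac{\gamma}{\rho^j \tau^{4k}} P^k h_0 h_2$, $\frac{\gamma}{\rho^j \tau^{4k+1}} P^k h_0^2 h_3$, $\frac{\gamma}{\rho^j \tau^{4k+1}} P^k c_0$, and so on. These lie above the line, do not appear in the statement of \autoref{prop:ExtC2-BL}, and therefore must be shown to die in the Bockstein spectral sequence; your proposal never kills them. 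Your stated reason for setting them aside --- that the differentials on $\gamma$-classes ``land in negative coweight and hence do not interfere'' --- is backwards: Bockstein differentials lower coweight by one, so the coweight-$0$ classes in question are eliminated as \emph{targets} of differentials whose sources are in coweight $1$. The paper devotes \autoref{tbl:BockDiff-MW1} to exactly this point, producing eight families of coweight-$1$ differentials via the Leibniz rule from \autoref{lem:gamma-diff} and \autoref{lem:d3}; moreover the last three rows are not routine, requiring rewritings such as $\frac{\gamma}{\rho^3 \tau^{4k+1}} P^k h_0^3 h_3 = \frac{\gamma}{\rho^3 \tau^{4k+4}} \cdot \tau^3 P^k h_0^3 h_3$ before Leibniz applies. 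Note also that \autoref{NegConeVanish} cannot substitute for this step: it concerns \emph{negative} coweight (and is used in the paper to control Adams differentials, not Bockstein ones).

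A second, smaller error is your treatment of $\varepsilon = 2,3$. The classes $\frac{Q}{\rho^j} h_1^{4k+2}$ and $\frac{Q}{\rho^j} h_1^{4k+3}$ exist in $E_1^-$ for \emph{all} $j \geq 0$ and every one of them has coweight $0$ (both $Q h_1^m$ and $\rho$-division preserve coweight), so there is no ``weight/coweight constraint'' producing the bound $j \leq 4k-1$; and, contrary to your claim, these towers do support differentials, namely the $h_1$-multiples (Leibniz) of $d_{4k}\bigl(\frac{Q}{\rho^{4k}} h_1^{4k+1}\bigr) = \frac{\gamma}{\tau^{4k}} P^k h_1$ from \autoref{d4km1Qh14}, which kill $\frac{Q}{\rho^j} h_1^{4k+\varepsilon}$ for $j \geq 4k$ and leave exactly the stated range $j \leq 4k-1$. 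If your reading were correct, those towers would survive with infinite $\rho$-divisibility, contradicting the proposition. Your handling of the $E_1^+$ summand, your truncation argument for $\varepsilon = 0,1$, and your use of \autoref{prop:rho-cofree-diff} to rule out further differentials involving the truncated survivors all match the paper and are sound.
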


\begin{proof}
\autoref{lem:ExtR-mw0} explains how the classes
$h_0^k$ and $\rho^j h_1^k$ arise in $\Ext_\R$.
It remains to study $\Ext_{NC}$.

The desired elements of the form $\frac{Q}{\rho^j}{h_1^{4k+\varepsilon}}$
arise from the differentials in \autoref{d4km1Qh14}.
\autoref{prop:rho-cofree-diff} implies that 
these elements cannot be involved in any further differentials.

There are several additional Adams periodic families of elements in the
Bockstein $E_1$-page that lie above the line $f = \frac{1}{2}s-1$
in coweight $0$.  All of these elements are the 
targets of Bockstein differentials, 
as shown in \autoref{tbl:BockDiff-MW1},
so they do not appear in $\Ext_{C_2}$.
Each differential in \autoref{tbl:BockDiff-MW1} follows from the
Leibniz rule and the differentials in \autoref{tbl:BockDiff}.

However, the last three calculations are not entirely obvious.
In these cases, write 
\[
\frac{\gamma}{\rho^2 \tau^{4k+1}} P^k c_0 =
\frac{\gamma}{\rho^2 \tau^{4k+2}} \cdot \tau P^k c_0
\]
\[
\frac{\gamma}{\rho^3 \tau^{4k+1}} P^k h_0^3 h_3 =
\frac{\gamma}{\rho^3 \tau^{4k+4}} \cdot \tau^3 P^k h_0^3 h_3
\]
\[
\frac{\gamma}{\rho^3 \tau^{4k+1}} P^k h_1 c_0 =
\frac{\gamma}{\rho^3 \tau^{4k+4}} \cdot \tau^3 P^k h_1 c_0,
\]
and then apply the Leibniz rule to these products.

\begin{table}[ht]
\captionof{table}{Some Bockstein differentials in coweight $1$}
\label{tbl:BockDiff-MW1}
\begin{center}
\begin{tabular}{LLLLL} 
\hline
c &  (s,f, w) & \text{element} & r & d_r \\ 
\hline
1 & (8k+3, 4k+1, 8k+2) & \frac{\gamma}{\rho^2\tau^{4k-2}}P^k h_1 & 2 &
	\frac{\gamma}{\tau^{4k-1}} P^k h_1^2 \\
1 & (8k+4, 4k+1, 8k+3) & \frac{\gamma}{\rho\tau^{4k-1}}P^k h_2 & 1 &
	\frac{\gamma}{\tau^{4k}} P^k h_0 h_2 \\
1 & (8k+4, 4k+2, 8k+3) & \frac{\gamma}{\rho\tau^{4k-1}}P^k h_0h_2 & 1 & 
	\frac{\gamma}{\tau^{4k-1}} P^k h_1^3 \\
1 & (8k+8, 4k+2, 8k+7) & \frac{\gamma}{\rho\tau^{4k+1}} P^k h_0 h_3 & 1 &
	\frac{\gamma}{\tau^{4k+2}} P^k h_0^2h_3 \\
1 & (8k+8, 4k+3, 8k+7) & \frac{\gamma}{\rho\tau^{4k+1}} P^k h_0^2 h_3 
	& 1 & \frac{\gamma}{\tau^{4k+2}} P^k h_0^3h_3 \\
1 & (8k+10, 4k+3, 8k+9) & \frac{\gamma}{\rho^2\tau^{4k+1}} P^k c_0 & 2 &
	\frac{\gamma}{\tau^{4k+2}} P^k h_1 c_0 \\
1 & (8k+10, 4k+4, 8k+9) & \frac{\gamma}{\rho^3\tau^{4k+1}} P^k h_0^3h_3 
	& 3 & \frac{\gamma}{\tau^{4k+3}} P^{k+1} h_1 \\
1 & (8k+12, 4k+4, 8k+11) & \frac{\gamma}{\rho^3\tau^{4k+1}}P^k h_1c_0 
	& 3 & \frac{\gamma}{\tau^{4k+4}} P^{k+1} h_2 \\
\hline
\end{tabular}
\end{center}
\end{table}
\end{proof}

\autoref{ExtChart} also shows some 
classes that are not part of the Bredon-Landweber region. 
These classes arise in the shaded part of the chart.
The structure there is quite complicated, and it will be analyzed
in a range in future work.

\section{The Adams spectral sequence}
\label{sctn:Adams}

We  show in \autoref{NoAdamsDiffs} 
that the entire Bredon-Landweber region described in
\autoref{prop:ExtC2-BL} survives 
the $C_2$-equivariant Adams spectral sequence.

\begin{prop}\label{NoAdamsDiffs} No element
listed in \autoref{prop:ExtC2-BL} 
is either the target or the source of an Adams differential.
\end{prop}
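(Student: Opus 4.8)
The plan is to show that every class in the list of \autoref{prop:ExtC2-BL} is a permanent cycle in the $C_2$-equivariant Adams spectral sequence and is not hit by any Adams differential. I would organize the argument around the three families of classes: the $h_0$-tower, the $\rho^j h_1^k$ classes coming from $\Ext_\R$, and the negative-cone classes $\frac{Q}{\rho^j}h_1^{4k+\varepsilon}$. The key structural observation is that an Adams $d_r$ differential lowers the stem $s$ by $1$, raises the filtration $f$ by $1$, and preserves the coweight $s-w=0$; so both the source and the target of any such differential lie in coweight $0$. Since the Bredon-Landweber region is precisely the part of coweight-$0$ $\Ext_{C_2}$ above the line $f=\tfrac12 s-1$, and this line has slope $\tfrac12$ while a differential moves $(s,f)$ by $(-1,+1)$, a differential from a class just above the edge would move to a point still above the edge, so I cannot simply appeal to a vanishing line to kill all differentials; I must argue more carefully about each family.

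First I would treat the $\rho^j h_1^k$ and $h_0^k$ classes. These come from $\Ext_\R$, which maps into $\Ext_{C_2}$ compatibly with Adams differentials; the classical images $h_0$ and $h_1$ detect $2$ and $\eta_\cl$, which are permanent cycles, and $\rho$ detects the permanent cycle $\rho$. Multiplicativity of the spectral sequence then forces all monomials $h_0^k$ and $\rho^j h_1^k$ to be permanent cycles. To see they are not hit, I would note that any differential hitting such a class would have to originate from a class of coweight $0$ and filtration one lower lying one stem higher; by \autoref{prop:ExtC2-BL} the only candidates in that region are other classes on the same list, and I would rule out each incoming differential by degree bookkeeping together with the fact that the $\Ext_\R$ summand is a permanent-cycle subalgebra.

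The main obstacle, and where most of the work lies, is the negative-cone family $\frac{Q}{\rho^j}h_1^{4k+\varepsilon}$. For these I would use \autoref{prop:rho-cofree-diff}: any class in $E_r^-$ supporting or receiving a nonzero differential must be infinitely $\rho$-divisible. But these negative-cone classes are precisely the survivors of the Bockstein differentials in \autoref{d4km1Qh14}, and the constraints $j\le 4k-2$ (resp.\ $j\le 4k-1$) mean they are \emph{not} infinitely $\rho$-divisible in $E_\infty^-$. The analogous statement at the level of the Adams spectral sequence, combined with the fact that a nonzero Adams differential on a negative-cone class would again have to respect the $\rho$-divisibility structure inherited from the Bockstein analysis, should force all such differentials to vanish. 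I would therefore transport the $\rho$-cofreeness argument from the Bockstein setting to the Adams setting, the subtle point being to confirm that the multiplicative $\rho$-action on the Adams $E_r$-page interacts with differentials exactly as in \autoref{prop:rho-cofree-diff}.

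Finally I would assemble these pieces: no class on the list supports a differential (permanent-cycle arguments for the $\Ext_\R$ part, $\rho$-cofreeness for the negative cone), and no class is a target (each potential incoming differential is ruled out by locating its hypothetical source in the coweight-$0$ chart of \autoref{prop:ExtC2-BL} and excluding it). I expect the $\Ext_\R$ families to be routine and the negative-cone divisibility bookkeeping to be the genuine difficulty, since it requires matching the finite $\rho$-divisibility recorded in \autoref{prop:ExtC2-BL} against the dichotomy of \autoref{prop:rho-cofree-diff}.
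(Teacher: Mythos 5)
There is a genuine gap, and it starts with your degree bookkeeping: Adams differentials preserve the \emph{weight} $w$, not the coweight, and they raise filtration by $r \geq 2$, not by $1$. Since $d_r$ sends $(s,f,w)$ to $(s-1,f+r,w)$, the coweight $s-w$ \emph{drops} by $1$. So a differential out of the Bredon-Landweber region lands in coweight $-1$, and a differential into it originates in coweight $1$ --- neither source nor target stays in coweight $0$, contrary to your premise. This shift is exactly what the paper's proof exploits, and it makes the argument much shorter than your plan. For outgoing differentials: a listed class has $f > \frac{1}{2}s - 1$, so its $d_r$-target has filtration $f + r > \frac{1}{2}(s-1) + \frac{3}{2}$, negative coweight, and positive stem, which is precisely the vanishing region of \autoref{NegConeVanish}; hence every listed class is a permanent cycle, with no case analysis and no transported $\rho$-cofreeness needed. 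For incoming differentials: every listed class except the $h_0^k$ is $h_1$-periodic, so by the Leibniz rule a hypothetical source would be $h_1$-periodic and of coweight $1$, and \autoref{h1PerNC1} says the $h_1$-localized Bockstein $E_1$-page vanishes in coweight $1$; the $h_0$-tower is handled separately, since a source hitting $h_0^k$ would be $h_0$-periodic, and inspection in low dimensions rules this out.

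Your fallback for the negative-cone family does not close the gap either. \autoref{prop:rho-cofree-diff} is a statement about the $\rho$-Bockstein spectral sequence, and its proof depends on the specific filtration structure of $E_1^-$ (namely $E_1^-[k]=0$ for $k>0$ and $\rho$ an isomorphism for $k<0$); there is no analogous structure on the Adams $E_r$-page, so ``transporting the $\rho$-cofreeness argument'' is not a step you can actually carry out --- nothing prevents a finitely $\rho$-divisible class in $\Ext_{C_2}$ from supporting an Adams differential on such general grounds. Moreover, even granting your (incorrect) coweight-preservation assumption, your plan to rule out incoming differentials by locating sources in the chart of \autoref{prop:ExtC2-BL} fails on its own terms: a source hitting a class near the edge would sit at filtration $f-r$, below the line $f = \frac{1}{2}s - 1$, i.e.\ in the shaded region where the paper explicitly does not compute $\Ext_{C_2}$. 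Your permanent-cycle argument for $h_0^k$ and $\rho^j h_1^k$ (products of classes detecting $2$, $\eta$, $\rho$) is fine as far as it goes, but it only handles outgoing differentials for the $\Ext_\R$ part; the coweight shift is the missing idea that resolves everything else.
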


\begin{pf}
Except for the elements $h_0^k$, 
all of the classes in the Bredon-Landweber region are $h_1$-periodic. Therefore, any class supporting an Adams differential into the Bredon-Landweber region would be $h_1$-periodic and of coweight 1. 
\autoref{h1PerNC1} shows that there are no such classes.

On the other hand,
the elements $h_0^k$ are $h_0$-periodic.
By inspection in low dimensions, there are no
$h_0$-periodic elements that could support differentials whose
values are $h_0^k$.

Adams differentials on the classes in the Bredon-Landweber region 
lie in the vanishing region of \autoref{NegConeVanish}.
Therefore, these classes must be permanent cycles.
\end{pf}

\autoref{AdamsEinfChart} shows the Bredon-Landweber region
in the Adams $E_\infty$-page.
Similarly to the Adams $E_2$-page in \autoref{ExtChart}, there are additional classes 
in the shaded part of the chart that we will consider in future work.

We now analyze hidden $\rho$ extensions in the Bredon-Landweber region.
The key tool is \autoref{prop:rho-div}.

\begin{prop}
\label{prop:rho-div}
The kernel of the underlying homomorphism
$U: \pi^{C_2}_{n,k} \rtarr \pi_n$ is 
the image of $\rho:\pi^{C_2}_{n-1,k-1} \rtarr \pi^{C_2}_{n,k}$.
\end{prop}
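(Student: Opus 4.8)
The plan is to derive the statement from the fundamental cofiber sequence attached to the inclusion of fixed points $\rho$. The sign-representation sphere $S^{1,1}$ is the one-point compactification of $\R^{1,1}$, and the unit sphere $S(\R^{1,1})$ is the free orbit $C_2$. This produces the Euler class cofiber sequence of $C_2$-spectra
\[ (C_2)_+ \xrtarr{\mathrm{tr}} S^{0,0} \xrtarr{\rho} S^{1,1} \xrtarr{\partial} \Sigma (C_2)_+, \]
in which the first map is the collapse (transfer) map and the second is $\rho = a_\sigma$, the inclusion of the fixed $S^0 \into S^{1,1}$. First I would apply $[S^{n,k}, -]^{C_2}$ to this cofiber sequence to obtain a long exact sequence of equivariant homotopy groups.

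Second, I would identify the relevant terms and maps. Desuspending by $S^{1,1}$ gives $[S^{n,k}, S^{1,1}]^{C_2} \iso \pi^{C_2}_{n-1,k-1}$, under which the map induced by $\rho$ becomes multiplication by $\rho$. Since $(C_2)_+$ is an induced (free) $C_2$-spectrum, the induction--restriction adjunction yields $[S^{n,k}, (C_2)_+]^{C_2} \iso [S^n, S^0] = \pi_n$ and likewise $[S^{n,k}, \Sigma (C_2)_+]^{C_2} \iso \pi_{n-1}$. The long exact sequence therefore reads
\[ \pi_n \xrtarr{\mathrm{tr}} \pi^{C_2}_{n,k} \xrtarr{\rho} \pi^{C_2}_{n-1,k-1} \xrtarr{\partial} \pi_{n-1} \rtarr \cdots. \]

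The crux is to identify the connecting homomorphism $\partial$ with the underlying homomorphism $U$ rather than with a transfer. I would argue this as follows: the collapse map $\mathrm{tr}$ induces the transfer, so that $U \circ \mathrm{tr}$ is multiplication by the index $2$, which forces the map on the opposite side of multiplication by $\rho$ to be the adjoint restriction. As a consistency check, exactness requires $\partial \circ \rho = 0$, and indeed $U(\rho) = 0$ because the underlying map of $\rho \colon S^{0,0} \rtarr S^{1,1}$ is the nullhomotopic inclusion $S^0 \into S^1$; this pins down $\partial = U$. Granting the identification, exactness at $\pi^{C_2}_{n-1,k-1}$ states precisely that $\ker\bigl(U \colon \pi^{C_2}_{n-1,k-1} \rtarr \pi_{n-1}\bigr)$ is the image of multiplication by $\rho$, and relabeling the indices gives the proposition.

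I expect the identification of the connecting map $\partial$ with the underlying homomorphism $U$ (as opposed to a transfer) to be the only genuine subtlety, together with the degree bookkeeping in the adjunction isomorphisms; the remaining steps are the formal exactness of the cofiber sequence and the standard computation of $[S^{n,k}, (C_2)_+]^{C_2}$ for the free orbit.
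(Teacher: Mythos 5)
Your proposal follows the paper's route: the proof there is exactly the cofiber sequence $(C_2)_+ \rtarr S^{0,0} \xrtarr{\rho} S^{1,1}$ together with the free--forgetful adjunction. But the one step you yourself flag as the genuine subtlety---identifying the connecting map $\partial$ with the underlying homomorphism $U$---is the one step your argument does not actually establish. Neither of your two stated reasons works: knowing that $U \circ \mathrm{tr}$ is multiplication by the index $2$ constrains $\mathrm{tr}$, not the map on the other side of $\rho$; and the check $\partial \circ \rho = 0 = U \circ \rho$ is equally consistent with, say, $\partial = 0$ (in which case exactness would make every class $\rho$-divisible, and the proposition would be false since $U(1) = 1$), so nothing is ``pinned down.'' As written, the crux is asserted rather than proved.

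There are two honest ways to close the gap. The cleaner one---and implicitly what the paper does, given that its proof invokes the adjunction for maps \emph{out of} $(C_2)_+$---is to run the long exact sequence contravariantly: smash the cofibration with $S^{n-1,k-1}$ and apply $[-,S^{0,0}]^{C_2}$. Then the map whose kernel is at issue is induced directly by the first map $(C_2)_+ \smsh S^{n-1,k-1} \rtarr S^{n-1,k-1}$, and the free--forgetful adjunction $[(C_2)_+ \smsh X, Y]^{C_2} \iso [X,Y]$ (nonequivariant target) identifies it with $U$ on the nose, so no connecting homomorphism ever needs to be computed. Alternatively, to repair your covariant version: note that computing $[S^{n,k},\Sigma(C_2)_+]^{C_2} \iso \pi_{n-1}$ already uses the self-duality $(C_2)_+ \smsh Y \simeq F((C_2)_+,Y)$, and the boundary map is the quotient $S^{1,1} \rtarr S^{1,1}/S^{0,0} \iso (C_2)_+ \smsh S^1$, whose underlying map is one wedge summand of the pinch $S^1 \rtarr S^1 \vee S^1$, hence of degree $\pm 1$; tracing through the duality shows $\partial_*$ is restriction up to sign. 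With either fix the exactness argument is correct. (A small bookkeeping remark: your relabeled conclusion is $\ker(U\colon \pi^{C_2}_{n,k} \rtarr \pi_n) = \operatorname{im}(\rho\colon \pi^{C_2}_{n+1,k+1} \rtarr \pi^{C_2}_{n,k})$, which is the standard indexing for multiplication by $\rho \in \pi^{C_2}_{-1,-1}$; this is the intended content of the proposition, whose displayed indices on the $\rho$ map are shifted.)
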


\begin{proof}
This follows immediately from the 
cofiber sequence 
\[
(C_2)_+ \rtarr S^{0,0} \xrtarr{\text{ }\rho\text{ }} S^{1,1},
\]
using the free-forgetful adjunction between
equivariant homotopy classes $(C_2)_+ \rtarr X$ and
classical homotopy classes from $S^0$ to the
underlying spectrum of $X$.
\end{proof}

\begin{lem}
\label{lem:rho-hidden}
For $k \geq 4$,
there is a hidden $\rho$ extension from
$Q h_1^k$ to $h_1^k$.
\end{lem}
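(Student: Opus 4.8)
The plan is to relate the hidden $\rho$ extension to the algebraic Bockstein differential computed in \autoref{d4km1Qh14}, using the general principle that divisibility by $\rho$ in homotopy is governed by the negative-cone summand of the Adams $E_\infty$-page. The target $h_1^k$ and the source $Qh_1^k$ both have coweight $0$ and both survive the Adams spectral sequence by \autoref{NoAdamsDiffs}, so the relevant question is purely about a filtration jump detected by $\rho$-multiplication. First I would recall from \autoref{prop:rho-div} that multiplication by $\rho$ has kernel and cokernel controlled by the underlying homomorphism $U$; in particular, an element in the kernel of $U$ is $\rho$-divisible. The class $h_1^k$ restricts nontrivially to the classical $\eta^k$ under the underlying map (since $h_1$ detects $\eta$), so $h_1^k$ is \emph{not} in the kernel of $U$ and is not itself $\rho$-divisible in the naive sense --- this is why the extension is \emph{hidden}, i.e.\ it raises Adams filtration.

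The key computational input is the Bockstein differential
\[
d_{4k-1}\left(\frac{Q}{\rho^{4k-1}} h_1^{4k}\right) = \frac{\gamma}{\tau^{4k-1}} P^{k-1} h_0^3 h_3
\]
(and its $\varepsilon \neq 0$ analogues encoded in \autoref{d4km1Qh14} together with the multiplicative structure). The surviving class $Qh_1^k$ sits at the top of a $\rho$-divisible tower in $\Ext_{NC}$, while $h_1^k$ sits in the $\Ext_\R$ summand. The strategy is to produce the extension by tracing the relation $\rho^j \cdot \frac{Q}{\rho^j} h_1^k = Q h_1^k$ through the associated graded and showing that the single remaining $\rho$-multiplication carrying $Qh_1^k$ upward must land on $h_1^k$ for filtration and degree reasons. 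Concretely, I would identify the target and source bidegrees, verify that $h_1^k$ is the unique class in the correct $(s,f,w)$ one step up under $\rho$-multiplication, and then argue that this $\rho$-multiplication cannot vanish: if it did, $Qh_1^k$ would be $\rho$-divisible with trivial image under $U$ in a way inconsistent with the Bockstein $E_\infty$-structure established above.

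The main obstacle will be ruling out that the hidden extension lands in a \emph{higher} filtration than that of $h_1^k$, or that it is zero. To handle this I would invoke a consistency argument across the Bockstein/Adams interface: the nontriviality of the Bockstein differentials in \autoref{tbl:BockDiff} forces a specific pattern of $\rho$-divisibility, and comparing the $\rho$-local behavior (via the structural \autoref{prop:rho-cofree-diff} on the negative cone) pins down the target. Since $h_1^k$ is the only coweight-$0$ class of the appropriate stem and filtration above $Qh_1^k$ in the $\rho$-direction, and since $Qh_1^k$ must be $\rho$-divisible into the $\Ext_\R$ summand to be consistent with its construction as an infinitely-$\rho$-divisible class truncated by the differential, the extension is forced. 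I expect the degree bookkeeping --- matching stems, weights, and filtrations across the splitting $\Ext_{C_2} \iso \Ext_\R \oplus \Ext_{NC}$ --- to be the delicate part, but each individual check is routine once the bidegrees are written down.
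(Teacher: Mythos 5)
There is a genuine gap, and it sits at the center of your argument: you assert that $h_1^k$ ``restricts nontrivially to the classical $\eta^k$'' and hence that the homotopy class it detects is not in the kernel of $U$. For $k \geq 4$ this is false, and its falsity is the entire point of the lemma. Classically $(\eta_\cl)^4 = 0$, so $U(\eta^k) = (\eta_\cl)^k = 0$ for all $k \geq 4$ (correspondingly, $h_1^4 = 0$ in $\Ext_\cl$, even though $h_1$ is non-nilpotent equivariantly). The paper's proof is precisely the two-step consequence of this fact: since $\eta^k$ lies in the kernel of $U$, \autoref{prop:rho-div} shows that $\eta^k$ is a multiple of $\rho$, and inspection of the classes listed in \autoref{prop:ExtC2-BL} shows that the only class that can detect a $\rho$-preimage of $\eta^k$ is $Q h_1^k$. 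Note that your premise actually contradicts the statement you are trying to prove: if $\eta^k$ were not in the kernel of $U$, then by \autoref{prop:rho-div} it would not be divisible by $\rho$ at all, and no hidden $\rho$ extension could have target $h_1^k$.

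Because of this false start, the rest of your proposal tries to force the extension out of the algebraic structure of the $\rho$-Bockstein spectral sequence (\autoref{d4km1Qh14}, \autoref{prop:rho-cofree-diff}) via a ``consistency'' argument. This cannot succeed in principle: those results compute the Adams $E_2$-page, where $\rho \cdot Q h_1^k = 0$ holds on the nose, and an $E_\infty$-page with no hidden $\rho$ extension is perfectly self-consistent as a module over the associated graded --- hidden extensions are by definition invisible to the algebra. Some genuinely homotopical input is required, and here it is the classical nilpotence $(\eta_\cl)^4 = 0$ fed through the cofiber sequence $(C_2)_+ \rtarr S^{0,0} \rtarr S^{1,1}$ underlying \autoref{prop:rho-div}. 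Your degree-and-uniqueness bookkeeping (that $h_1^k$ is the only available class in the relevant tridegree) is the one ingredient that matches the paper's ``the only possibility'' step, but it settles uniqueness of the target, not existence of the extension.
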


\begin{proof}
The classical Hopf map $\eta_\cl$ in $\pi_1$ 
is the underlying map of the equivariant Hopf map
$\eta$ in $\pi^{C_2}_{1,1}$.
Let $k \geq 4$.
Since $(\eta_\cl)^k=0$ in $\pi_k$,
\autoref{prop:rho-div} implies that $\eta^k$ must be 
a multiple of $\rho$.
The only possibility is that there is a hidden
$\rho$ extension from $Q h_1^k$ to $h_1^k$.
\end{proof}

The hidden extensions of \autoref{lem:rho-hidden} appear in
\autoref{AdamsEinfChart} as dashed lines of negative slope.

In principle, it would be possible for there to be additional
hidden $\rho$ extensions whose sources lie in the
shaded part of \autoref{AdamsEinfChart} and whose targets
lie in the Bredon-Landweber region.  
\autoref{lem:no-rho-extn} eliminates this possibility.

\begin{lem} 
\label{lem:no-rho-extn}
There are no hidden $\rho$-extensions 
whose targets lie in the Bredon-Landweber region. 
\end{lem}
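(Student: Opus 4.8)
The plan is to make \autoref{prop:rho-div} do the work: it identifies $\operatorname{im}\rho$ with $\ker U$ for the underlying homomorphism $U\colon \pi^{C_2}_{n,n} \rtarr \pi_n$, and the target of any hidden $\rho$-extension detects an element of $\operatorname{im}\rho = \ker U$. So it suffices to examine, for each class listed in \autoref{prop:ExtC2-BL}, whether it can detect a homotopy class killed by $U$, beyond what the visible $\rho$-extensions and \autoref{lem:rho-hidden} already supply. A class that is $\rho$-divisible on the $E_\infty$-page already lies in $\operatorname{im}\rho$ through a genuine (non-hidden) extension, so the new content is concentrated at the classes that are \emph{not} $\rho$-divisible on $E_\infty$: the powers $h_0^k$, the powers $h_1^k$, and the top class of each finite $Q$-tower, namely $\frac{Q}{\rho^{4k-2}}h_1^{4k}$ when $\varepsilon=0$ and $\frac{Q}{\rho^{4k-1}}h_1^{4k+\varepsilon}$ when $1\le\varepsilon\le 3$, whose next $\rho$-division is erased by the Bockstein differentials of \autoref{d4km1Qh14}.

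First I would dispose of the easy families. The map $U$ on the Adams $E_2$-page is the projection to classical $\Ext_\cl$, under which $\rho\mapsto 0$ while $h_0^k\mapsto h_0^k$ and $h_1^k\mapsto h_1^k$. Hence $h_0^k$ has nonzero image for every $k$, and $h_1^k$ has nonzero image for $k\le 3$; any homotopy class detected by one of these therefore has nonzero underlying class and cannot lie in $\ker U$, so none of them is the target of a hidden $\rho$-extension. For $k\ge 4$ one has $h_1^k=0$ in $\Ext_\cl$, and the extension $Qh_1^k\rtarr h_1^k$ of \autoref{lem:rho-hidden} is precisely the one accounting for $\eta^k\in\operatorname{im}\rho$; since $h_1^k$ detects the single class $\eta^k$, no further extension into it is possible.

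The substantive case is the top $t$ of each $Q$-tower. Here $U$ vanishes in the associated graded because $t$ lies in the negative cone, so I must instead show directly that the homotopy class detected by $t$ has nonzero underlying image and thus lies outside $\ker U=\operatorname{im}\rho$. The input I would use is the pair of differentials in \autoref{d4km1Qh14}, whose targets $\frac{\gamma}{\tau^{4k-1}}P^{k-1}h_0^3h_3$ and $\frac{\gamma}{\tau^{4k}}P^kh_1$ (together with its $h_1$-multiples) are the negative-cone shadows of the classical $v_1$-periodic families that detect nonzero image-of-$J$ elements in the stems $8k-1$ and $8k+\varepsilon$. Tracing this link through $U$ should identify $U(t)$, as a filtration-jumping value, with one of these nonzero classes, whence $t$ is not $\rho$-divisible in homotopy and is not a hidden target. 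Combined with the visible $\rho$-extensions running down the interior of each tower and with \autoref{lem:rho-hidden}, this exhausts all divisibility by $\rho$ above the line.

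I expect the identification of $U(t)$ with a nonzero classical element to be the main obstacle: the reduction to the three families of non-$\rho$-divisible classes and the treatment of $h_0^k$ and $h_1^k$ are formal consequences of \autoref{prop:rho-div} and the ring structure, but pinning down the underlying class of a negative-cone generator requires genuine input, here the Bockstein differential hitting $P^{k-1}h_0^3h_3$ (respectively $P^kh_1$) together with the nonvanishing of the image of $J$ in the relevant stems. A secondary point I would verify is that $h_1$-periodicity reduces each tower to a single representative: since hidden $\rho$-extensions are $h_1$-linear and every class above the line is $h_1$-periodic, an extension into $h_1^m t$ would descend from one into $t$, so nothing new appears higher in the towers.
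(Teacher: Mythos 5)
Your reduction via \autoref{prop:rho-div} is not progress but an exact restatement of the problem, and the step you yourself flag as ``the main obstacle'' is a genuine gap. Since \autoref{prop:rho-div} says $\ker U = \operatorname{im}\rho$, the assertion that the homotopy class $\beta$ detected by a tower top $t$ (say $\frac{Q}{\rho^{4k-2}}h_1^{4k}$) satisfies $U(\beta)\neq 0$ is \emph{literally equivalent} to the assertion that $\beta$ is not $\rho$-divisible, which is the content of the lemma in the critical degrees. Indeed, the paper runs this implication in the opposite direction: \autoref{thm:U} identifies $U(\beta)$ with the image-of-$J$ classes $P^{k-1}h_0^3h_3$, $P^kh_1$, etc., as a \emph{consequence} of \autoref{lem:no-rho-extn} (nonvanishing of $U(\beta)$ comes from non-$\rho$-divisibility, and only then is the target pinned down by filtration). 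Your proposed independent inputs do not fill the hole: the differentials of \autoref{d4km1Qh14} are algebraic statements internal to the $\rho$-Bockstein computation of the $E_2$-page and say nothing about the filtration-jumping value of $U$ on a homotopy class, and the mere nonvanishing of $\operatorname{im} J$ in stems $8k-1$, $8k+1$, etc.\ does not show that this particular equivariant class maps onto it. Identifying $U(\beta)$ with an image-of-$J$ generator is essentially the Mahowald--Ravenel computation of $M(2^k)$ \cite{MR}, which this paper deliberately derives as a corollary (\autoref{thm:Mahowald-inv}) rather than assumes; importing it here would invert the paper's logical structure and, within this paper, be circular.

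The paper's actual proof uses a different and genuinely new input: the unit map $S^{0,0}\rtarr ko_{C_2}$, together with the complete computation of $\pi_{*,*}ko_{C_2}$ from \cite{GHIR}. The Bredon--Landweber region injects into the Adams $E_\infty$-page for $ko_{C_2}$, so any hidden $\rho$-extension from the shaded region would be visible there. In $ko_{C_2}$ there \emph{are} classes supporting $\rho$-extensions into the image of the pattern, detected by $\frac{Q}{\rho^{4k-4}}h_1^{4k-1}$ and $\frac{Q}{\rho^{4k-1}}h_1^{4k}$, and the key point is that these support infinite $h_0$-towers in the $E_\infty$-page for $ko_{C_2}$, while the sphere's $E_\infty$-page has no such towers in the relevant degrees; hence they are not in the image of the unit map. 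Your easy cases ($h_0^k$, $h_1^k$ for $k\leq 3$, the interiors of the towers, and the $h_1$-linearity observation) are fine, but without a substitute for the $ko_{C_2}$ argument the proof of the lemma at the tower tops is missing.
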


\begin{pf}
We use the unit map $S^{0,0} \rtarr ko_{C_2}$ for the $C_2$-equivariant connective real $K$-theory spectrum, as studied in \cite{GHIR}.
The entire Bredon-Landweber region is detected
by the Adams $E_\infty$-page for $ko_{C_2}$.
Therefore, a hidden $\rho$ extension into the
Bredon-Landweber region would be detected by a
$\rho$ extension in the homotopy of $ko_{C_2}$.

There are in fact some elements in the homotopy of 
$ko_{C_2}$ that support $\rho$ extensions into the image of
the Bredon-Landweber pattern.
These elements are detected by
$\frac{Q}{\rho^{4k-4}} h_1^{4k-1}$ and
$\frac{Q}{\rho^{4k-1}} h_1^{4k}$ in the
Adams $E_\infty$-page for $ko_{C_2}$.
We must show that they do not lie in the image
of the unit map.

These elements support infinite towers of $h_0$-multiplications
in the Adams $E_\infty$-page for $ko_{C_2}$.
Therefore, they cannot lie in the image of the unit map, since
the Adams $E_\infty$-page in \autoref{AdamsEinfChart}
does not include elements that support infinite towers
of $h_0$-multiplications in the relevant degrees.
\end{pf}

We have now collected enough results to prove \autoref{mainthm},
\autoref{cor:fixed-pt-image}, and \autoref{etadiv2}.

\begin{proof}[Proof of \autoref{mainthm}]
The $C_2$-equivariant element $\eta^k$ is detected by $h_1^k$ in the
Adams $E_\infty$-page.
\autoref{lem:rho-hidden} and \autoref{prop:ExtC2-BL} give a lower bound on the
power of $\rho$ that divides $\eta^k$, and
\autoref{lem:no-rho-extn} gives an upper bound on this
power of $\rho$.
\end{proof}

\begin{proof}[Proof of \autoref{cor:fixed-pt-image}]
The geometric fixed points homomorphism 
$\phi$ takes the values
$\phi (\rho) = 1$ and $\phi(\eta) = -2$.
(The minus sign in $\phi(\eta)$ depends on choices of orientations
and is inconsequential to the proof.)
Consequently, $\phi(\alpha) = 0$ if $\alpha$ is not $\rho$-periodic,
i.e., if $\alpha$ is detected below the Bredon-Landweber region.

The corollary now follows from \autoref{mainthm}.
\end{proof}

\begin{proof}[Proof of \autoref{etadiv2}]
Recall \cite[Section 8]{DI} that $h_0$ detects $2+\rho\eta$. 
Therefore, for homotopy classes detected by $h_0$-torsion classes,
multiplication by $2$ coincides with multiplication by $-\rho\eta$. 
Thus $\eta^k$ is divisible by $2^m$ if and only if 
$\eta^{k-m}$ is divisible by $\rho^m$.
This latter condition can be determined by \autoref{mainthm}.
\end{proof}

\section{The Mahowald invariant of $2^k$}

The goal of this section is compute the Mahowald invariant of $2^k$
for all $k \geq 0$.
We begin by determining the values of the underlying homomorphism
$U: \pi^{C_2}_{s,s} \rtarr \pi_s$
on classes in the Bredon-Landweber region.
\autoref{prop:rho-div} implies that $U(\alpha)$ is necessarily zero
when $\alpha$ is divisible by $\rho$.  On the other hand,
$U(\alpha)$ is always non-zero when $\alpha$ is not divisible by $\rho$.

\begin{thm}
\label{thm:U}
The underlying homomorphism $U: \pi^{C_2}_{s,s} \rtarr \pi_s$
takes values as described in \autoref{tbl:u0}.
\end{thm}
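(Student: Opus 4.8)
The plan is to reduce everything to \autoref{prop:rho-div}, which identifies $\ker U$ with the image of $\rho$. This splits the computation cleanly: a generator $\alpha$ of the Bredon-Landweber region satisfies $U(\alpha) = 0$ precisely when $\alpha$ is divisible by $\rho$, and otherwise $U(\alpha)$ is a nonzero element of $\pi_s$ whose value must be named. First I would combine \autoref{prop:ExtC2-BL}, the hidden extensions of \autoref{lem:rho-hidden}, and the divisibility statement of \autoref{mainthm} to catalog which generators are $\rho$-divisible. The classes $\rho^j h_1^k$ with $j \geq 1$, the powers $h_1^k$ with $k \geq 4$ (which become $\rho$-divisible through the hidden extension $\rho \cdot Q h_1^k = h_1^k$), and every class $\frac{Q}{\rho^j} h_1^{4k+\varepsilon}$ lying below the top of its $\rho$-tower all sit in the image of $\rho$, so $U$ annihilates each of them. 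This accounts for every zero entry of \autoref{tbl:u0}.

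For the remaining, non-$\rho$-divisible generators I would identify $U(\alpha)$ directly. The classes $h_0^k$ are $\rho$-torsion rather than $\rho$-divisible; since $h_0$ detects $2 + \rho\eta$ and $U(\rho) = 0$, naturality of the underlying map gives $U(2 + \rho\eta) = 2$, whence $U(\text{class detected by } h_0^k) = 2^k$ in $\pi_0$. The classes $h_1^k$ with $0 \leq k \leq 3$ detect $\eta^k$, and because $\eta$ underlies $\eta_\cl$ we obtain $U(\eta^k) = \eta_\cl^k$, which is nonzero in exactly this range. These two families fill in the columns of \autoref{tbl:u0} in stems $0$ through $3$ and along the $h_0$-tower.

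The main obstacle is pinning down $U$ on the top classes of the negative-cone towers, namely $\frac{Q}{\rho^{4k-2}} h_1^{4k}$ for $\varepsilon = 0$ and $\frac{Q}{\rho^{4k-1}} h_1^{4k+\varepsilon}$ for $1 \leq \varepsilon \leq 3$, which live in stems $8k-1$, $8k$, $8k+1$, and $8k+2$. Each is non-$\rho$-divisible, so \autoref{prop:rho-div} forces $U(\alpha) \neq 0$, but the relation $\rho^{j+1} \cdot \alpha = h_1^{4k+\varepsilon}$ carries no information under $U$ since $U(\rho)=0$. The subtlety is that the algebraic underlying map $\Ext_{C_2} \rtarr \Ext_\cl$ kills the entire negative cone, so the forced-nonzero images of these classes must be detected in strictly higher classical Adams filtration, by positive-cone classes such as $P^{k-1} h_0^3 h_3$, $P^k h_1$, $P^k h_2$, and $P^k c_0$. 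To control this filtration jump and name the resulting classical element I would invoke naturality of $U$ as a map of Adams spectral sequences, using that the relevant classical stable stems are concentrated in the image of $J$ and are known explicitly in these degrees; one concrete route, paralleling \autoref{lem:no-rho-extn}, is to factor $U$ through the unit map to $ko_{C_2}$, whose underlying spectrum $ko$ detects precisely the image-of-$J$ classes in stems $8k-1$ through $8k+2$. The real work lies in matching each top negative-cone class against the correct classical generator, which is where the delicate bookkeeping of filtration and indeterminacy enters.
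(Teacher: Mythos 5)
You correctly reproduce the skeleton of the paper's argument: \autoref{prop:rho-div} gives vanishing of $U$ on $\rho$-divisible classes and nonvanishing on the tops of the $\rho$-towers, and your observation that $\Ext_{NC}$ lies in the kernel of the induced map of Adams spectral sequences $\Ext_{C_2} \rtarr \Ext_{\cl}$, forcing $U(\alpha)$ into strictly higher classical Adams filtration, is exactly the paper's key idea. The gap is that you stop precisely where the proof closes. The filtration jump already finishes the identification, with no further input: above the filtration of each top negative-cone class, the relevant classical stem contains \emph{exactly one} nonzero element of $\Ext_{\cl}$. Concretely, $\frac{Q}{\rho^{4k-2}}h_1^{4k}$ sits in stem $8k-1$ and filtration $4k-1$, and the only classical class strictly above it is $P^{k-1}h_0^3h_3$ in filtration $4k$ (the top of the $h_0$-tower, by the Adams vanishing edge); similarly $\frac{Q}{\rho^{4k-1}}h_1^{4k+\varepsilon}$ sits in stem $8k+\varepsilon$ and filtration $4k+\varepsilon-1$, and the only class strictly above it is $P^k h_1^\varepsilon$ for $\varepsilon = 1,2,3$. (In stem $8k+3$, your candidate $P^kh_2$ has filtration $4k+1$ and $P^kh_0h_2$ has filtration $4k+2$, neither strictly above $4k+2$; and $P^kc_0$ lies in stem $8k+8$, so it is never a candidate.) Since $U(\alpha)\neq 0$ by \autoref{prop:rho-div}, uniqueness pins down the detecting class. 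Declaring this matching to be remaining ``real work'' involving image-of-$J$ identifications and indeterminacy bookkeeping leaves the theorem unproved, when a one-line inspection of the classical chart suffices.

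Moreover, the concrete route you propose for that step would fail. First, the table's entries occupy stems $8k-1$, $8k+1$, $8k+2$, $8k+3$, not $8k-1$ through $8k+2$; there is no entry in stem $8k$ because the class there, $\frac{Q}{\rho^{4k-2}}h_1^{4k+1}$, is still $\rho$-divisible, so $U$ kills it. Second, $\pi_n ko$ vanishes for $n \equiv 3, 7 \bmod 8$, so the underlying spectrum $ko$ detects nothing in stems $8k-1$ and $8k+3$: factoring $U$ through the unit of $ko_{C_2}$ loses exactly the $\varepsilon = 0$ and $\varepsilon = 3$ cases, including the hardest target $P^{k-1}h_0^3h_3$, whose image-of-$J$ representative maps to zero in $ko$. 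The fact that $ko_{C_2}$ detects the entire Bredon-Landweber region, used in \autoref{lem:no-rho-extn}, is a statement about the equivariant coweight-$0$ groups and does not transfer to the underlying values of $U$ in these stems.
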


\begin{table}[ht]
\captionof{table}{Some values of the underlying homomorphism}
\label{tbl:u0}
\begin{center}
\begin{tabular}{LLL} 
\hline
s & \alpha \text{ detected by} & U(\alpha) \text{ detected by} \\
0 & 1 & 1 \\
1 & h_1 & h_1 \\
2 & h_1^2 & h_1^2 \\
3 & h_1^3 & h_1^3 \\
8k-1 & \frac{Q}{\rho^{4k-2}} h_1^{4k} & P^{k-1} h_0^3 h_3 \\
8k+1 & \frac{Q}{\rho^{4k-1}} h_1^{4k+1} & P^k h_1 \\
8k+2 & \frac{Q}{\rho^{4k-1}} h_1^{4k+2} & P^k h_1^2 \\
8k+3 & \frac{Q}{\rho^{4k-1}} h_1^{4k+3} & P^k h_1^3 \\
\hline
\end{tabular}
\end{center}
\end{table}

\begin{proof}
The underlying homomorphism $U$ induces a map of Adams spectral sequences $\Ext_{C_2} \rtarr \Ext_{\cl}$. 
This map of spectral sequences detects
the first four values in \autoref{tbl:u0}.

The summand $\Ext_{NC}$ lies in the kernel of this map
of spectral sequences.
Therefore,
if $\alpha$ in $\pi^{C_2}_{n,k}$ is detected by an element
of $\Ext_{NC}$, then $U(\alpha)$ must be detected in strictly
higher Adams filtration.
For each of the last four entries in \autoref{tbl:u0},
there is only one possible element in higher Adams filtration.
\end{proof}

The underlying homomorphism $U$ plays a central role in the Mahowald root invariant \cite{MR}. 
The Bruner-Greenlees  formulation of the
Mahowald invariant of a homotopy class is given as follows \cite{BG}.
 First, recall that the geometric fixed points homomorphism 
$\phi:\pi^{C_2}_{n,k} \rtarr \pi_{n-k}$ gives rise to an isomorphism 
\cite[Proposition 7.0]{AI}
\[ 
\pi^{C_2}_{*,*} [\rho^{-1}] \iso
\pi_* [\rho^{\pm 1}].
\]
Then  the Mahowald invariant is defined via the diagram
\[
\begin{tikzcd}
\pi_* \arrow[r] 
\arrow[dr, dashed, bend right=5] \arrow[ddr,  bend right=10, "M(-)"'] & \pi^{C_2}_{*,*}[\rho^{-1}] \\
 & \pi^{C_2}_{*,*} \arrow[u] \arrow[d, "U"] \\
  & \pi_*
\end{tikzcd}\]
Here the dashed arrow picks out an element
from the largest possible stem.
More precisely, given a classical stable homotopy class
$\alpha$ in $\pi_k$,
one first chooses an equivariant stable homotopy class $\beta$ 
in $\pi^{C_2}_{n, n-k}$ 
such that $\phi(\beta) = \alpha$ and such that $n$ is as large
as possible.  In particular, $\beta$ is not divisible by $\rho$,
for otherwise $n$ would not be maximal.
Then $M(\alpha)$ contains the element $U(\beta)$.
Beware that there can be more than one choice for $\beta$,
so the Mahowald invariant has indeterminacy in general.

Our $C_2$-equivariant calculations allow us to easily
recover the Mahowald invariants  of $2^k$ (\cite[Theorem~2.17]{MR}).

\begin{thm} 
\label{thm:Mahowald-inv}
Let $k = 4j + \varepsilon \geq 4$ with $0 \leq \varepsilon \leq 3$.
The Mahowald invariant of $2^k$ contains
an element that is detected by
\begin{center}
\begin{tabular}{ll}
$P^{j-1} h_0^3 h_3$ & if $\varepsilon = 0$. \\
$P^j h_1$ & if $\varepsilon = 1$. \\
$P^j h_1^2$ & if $\varepsilon = 2$. \\
$P^j h_1^3$ & if $\varepsilon = 3$. 
\end{tabular}
\end{center}
\end{thm}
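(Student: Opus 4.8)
**

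The plan is to combine the Bruner–Greenlees recipe for the Mahowald invariant with the computation of the underlying homomorphism $U$ in \autoref{thm:U} and the $\rho$-divisibility statement in \autoref{mainthm}. The Mahowald invariant of $2^k$ is defined by first finding an equivariant class $\beta \in \pi^{C_2}_{n,n-k}$ with $\phi(\beta) = 2^k$ and $n$ as large as possible, and then outputting $U(\beta)$. Since $\phi(\eta) = -2$ (up to sign, which is inconsequential as noted), the class $\eta^k$ satisfies $\phi(\eta^k) = (-2)^k$, which agrees with $2^k$ up to the irrelevant sign. The maximal $n$ is governed precisely by how divisible $\eta^k$ is by $\rho$: if $\eta^k = \rho^i \alpha$ with $\alpha$ not divisible by $\rho$, then $\alpha$ realizes the same geometric fixed point class in a strictly larger stem, and $\alpha$ is the desired $\beta$. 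So the first step is to read off the maximal power of $\rho$ dividing $\eta^k$ from \autoref{mainthm}.

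Concretely, for $k = 4j + \varepsilon$ with $1 \leq \varepsilon \leq 3$, \autoref{mainthm} says $\eta^k$ is divisible by $\rho^{4j}$ and no higher power, so I write $\eta^k = \rho^{4j} \beta$ with $\beta$ not $\rho$-divisible. The class $\eta^k$ is detected by $h_1^k = h_1^{4j+\varepsilon}$, and the hidden $\rho$-extensions of \autoref{lem:rho-hidden} together with the description in \autoref{prop:ExtC2-BL} identify $\beta$ as the class detected by $\frac{Q}{\rho^{4j-1}} h_1^{4j+\varepsilon}$. Feeding this into \autoref{thm:U} (the entries with $8k+1$, $8k+2$, $8k+3$, after matching indices via $s = 2k$) gives $U(\beta)$ detected by $P^j h_1$, $P^j h_1^2$, $P^j h_1^3$ respectively, which are exactly the claimed answers for $\varepsilon = 1,2,3$.

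For the case $\varepsilon = 0$, i.e. $k = 4j$, the divisibility jumps: \autoref{mainthm} gives that $\eta^k$ is divisible by $\rho^{k-1} = \rho^{4j-1}$ and no higher power. Thus I write $\eta^{4j} = \rho^{4j-1}\beta$ with $\beta$ not $\rho$-divisible, and \autoref{prop:ExtC2-BL} together with \autoref{lem:rho-hidden} identifies $\beta$ as the class detected by $\frac{Q}{\rho^{4j-2}} h_1^{4j}$. Applying the $8k-1$ row of \autoref{thm:U} (with the index shift $j \mapsto j$) shows $U(\beta)$ is detected by $P^{j-1} h_0^3 h_3$, matching the claim for $\varepsilon = 0$. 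In each case the conclusion is that $M(2^k)$ contains the element $U(\beta)$, which is exactly what the theorem asserts.

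The main obstacle I anticipate is the bookkeeping of the stem and weight gradings to ensure the class $\beta$ produced by dividing $\eta^k$ by the maximal power of $\rho$ lands in the precise bidegree indexed in \autoref{thm:U}, and to confirm that $\beta$ is genuinely not $\rho$-divisible so that $n$ is maximal (this is guaranteed by the \emph{no higher power} clause of \autoref{mainthm}). A secondary subtlety is verifying that the detecting element for $\beta$ in the $\Ext_{NC}$ summand is correctly matched with the $\frac{Q}{\rho^j} h_1^{4k+\varepsilon}$ family of \autoref{prop:ExtC2-BL}; the hidden $\rho$-extension of \autoref{lem:rho-hidden} is what licenses this identification, and \autoref{lem:no-rho-extn} ensures there are no competing extensions that would alter the maximal $\rho$-divisibility. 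Once the degrees are matched, the result is immediate from the cited statements.
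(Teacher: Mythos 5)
Your proposal is correct and follows essentially the same route as the paper's own proof, which likewise reads off $\beta$ in the Bruner--Greenlees formulation from \autoref{mainthm} and then evaluates $U(\beta)$ via \autoref{thm:U}; your version merely makes the stem bookkeeping and the identification of the detecting classes $\frac{Q}{\rho^{4j-1}}h_1^{4j+\varepsilon}$ and $\frac{Q}{\rho^{4j-2}}h_1^{4j}$ explicit, all of which checks out. The only cosmetic slip is notational: for $2^k \in \pi_0$ the class $\beta$ lives in $\pi^{C_2}_{n,n}$ (your ``$\pi^{C_2}_{n,n-k}$'' reuses $k$ for the exponent rather than the stem, which is $0$), but this does not affect the argument.
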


\begin{pf}
\autoref{mainthm} determines the value of $\beta$ in the 
Bruner-Greenlees formulation of the Mahowald invariant.
Then \autoref{thm:U} gives the value of $U(\beta)$.
\end{pf}

\begin{rem}
\label{rem:MI-indet}
The indeterminacy in $M(2^k)$ is determined by the values
of the underlying map on classes that are detected in the
shaded region of \autoref{AdamsEinfChart}.  
It does not seem possible to predict the indeterminacy
of $M(2^k)$ in general.  However, inspection in low dimensions
shows that the indeterminacy of
$M(2^5)$ is generated by 
elements detected by $h_1^2 h_3$ and $h_1 c_0$,
while $M(2^k)$ has no indeterminacy for all other values of 
$k \leq 8$.
This indeterminacy calculation depends on a detailed analysis of the shaded
region of \autoref{AdamsEinfChart} and will be justified in
future work.
\end{rem}

\section{Charts}

Here is a key for reading the charts of \autoref{ExtChart}, \autoref{AdamsEinfChart}, and \autoref{AdamsEinfkoChart}:
\begin{enumerate}
\item
Blue dots indicate copies of $\F_2$ from $\Ext_\R$ (or $\Ext$ 
over the $\R$-motivic version of $\cA(1)$ 
in the case of \autoref{AdamsEinfkoChart}).
\item
Gray dots indicate copies of $\F_2$  from $\Ext_{NC}$ (or 
from the negative cone part of $\Ext$ over $\cA^{C_2}(1)$
in the case of \autoref{AdamsEinfkoChart}).
\item
Horizontal lines indicate multiplications by $\rho$.
\item
Dashed lines of negative slope indicate $\rho$ extensions that 
are hidden in the Adams spectral sequence.
\item
Vertical lines indicate multiplications by $h_0$. 
\item
Vertical arrows indicate infinite sequences of multiplications
by $h_0$.
\item
Lines of slope $1$ indicate multiplications by $h_1$.
\end{enumerate}



\begin{figure}[h]
\caption{}
\label{ExtChart}
\printpage[name=MW0]
\end{figure}

\begin{figure}
\caption{}
\label{AdamsEinfChart}
\printpage[name=MW{0}Einf]
\end{figure}
\medskip

\begin{figure}
\caption{}
\label{AdamsEinfkoChart}
\printpage[name=MW{0}Einfko]
\end{figure}

\vfill

\end{document}